\def\R{\mathbb R}
\def\N{\mathbb N}
\def\Astar{\mathscr Z}
\def\alphamax{\alpha_{\text{max}}}
\def\kA{\bar k}
\def\Lmax{L^{\text{max}}}
\def\LS{\ensuremath{\mathcal L^0}}
\DeclarePairedDelimiter{\norm}{\lVert}{\rVert}
\DeclarePairedDelimiter{\abs}{\lvert}{\rvert}
\DeclareMathOperator{\sign}{sign}
\DeclareMathOperator{\cone}{cone}
\DeclareMathOperator{\bigO}{\mathcal O}
\newtheorem{assumption}{Assumption}
\newtheorem{definition}{Definition}
\newtheorem{lemma}{Lemma}
\newtheorem{proposition}{Proposition}
\newtheorem{theorem}{Theorem}
\newtheorem{corollary}{Corollary}
\newtheorem{remark}{Remark}
\newcommand{\email}[1]{E-mail: \href{mailto:#1}{\texttt{#1}}}
\begin{document}
\thispagestyle{plain}

\setcounter{page}{1}

{\centering
{\Large \bfseries Complexity results and active-set identification of a derivative-free method for bound-constrained problems}

\bigskip\bigskip
Andrea Brilli$^*$, Andrea Cristofari$^\dag$, Giampaolo Liuzzi$^*$, Stefano Lucidi$^*$
\bigskip

}

\begin{center}
\small{\noindent$^*$Department of Computer, Control and Management Engineering \\
Sapienza University of Rome, Italy \\
Via Ariosto, 25, 00185 Rome, Italy \\
\email{brilli@diag.uniroma1.it, liuzzi@diag.uniroma1.it, lucidi@diag.uniroma1.it} \\
\bigskip
$^\dag$Department of Civil Engineering and Computer Science Engineering \\
University of Rome ``Tor Vergata'' \\
Via del Politecnico, 1, 00133 Rome, Italy \\
\email{andrea.cristofari@uniroma2.it}  \\
}
\end{center}

\bigskip\par\bigskip\par
\noindent \textbf{Abstract.}
In this paper, we analyze a derivative-free line search method designed for bound-constrained problems. Our analysis demonstrates that this method exhibits a worst-case complexity comparable to other derivative-free methods for unconstrained and linearly constrained problems. In particular, when minimizing a function with $n$ variables, we prove that at most $\bigO{(n\epsilon^{-2})}$ iterations are needed to drive a criticality measure below a predefined threshold $\epsilon$, requiring at most $\bigO{(n^2\epsilon^{-2})}$ function evaluations.
We also show that the total number of iterations where the criticality measure is not below $\epsilon$ is upper bounded by $\bigO{(n^2\epsilon^{-2})}$.
Moreover, we investigate the method capability to identify active constraints at the final solutions.
We show that, after a finite number of iterations, all the active constraints satisfying the strict complementarity condition are correctly identified.

\bigskip\par
\noindent \textbf{Keywords.} Worst-case complexity. Active-set identification. Derivative-free methods.

\bigskip\par
\noindent \textbf{MSC2000 subject classifications.} 90C56. 90C30. 90C26.

\section{Introduction}
Let us consider the following nonlinear bound-constrained optimization problem:
\begin{equation}\label{prob}
\begin{split}
& \min\ f(x) \\
& \ \ l_i \le x_i \le u_i,\qquad i=1,\dots,n,
\end{split}
\end{equation}
where $l_i,u_i\in\R\cup\{\pm\infty\}$, $l_i<u_i$, $i=1,\dots,n$.

We restrict ourselves to considering problem~\eqref{prob} when function values are given by a time consuming black-box oracle. Hence, the analytical expression of $f$ is not available and first-order information cannot be explicitly used nor approximated within a reasonable amount of time. In such a context, derivative-free methods~\cite{audet2017derivative,conn2009introduction,larson_menickelly_wild_2019} are usually employed to solve the problem.

In the literature, several derivative-free methods have been proposed to solve problem~\eqref{prob} (even with more general constraints).
In particular, we can distinguish among model-based methods~\cite{conejo2013global,conn2009introduction,gratton2011active,gumma2014derivative,hough2022model,powell2015fast}, where the objective function is sampled in a neighborhood of the current point to build an appropriate model to be minimized, direct-search methods~\cite{audet2006mesh,gratton2019direct,kolda:03,kolda2007stationarity,lewis1999pattern}, where the objective function is sampled in a neighborhood of the current point in order to find descent, and line search methods~\cite{lucidi2002derivative,lucidi2002objective}, where directions are explored by allowing the stepsize to dynamically expand.

For model-based and direct-search methods applied to problems with linear constraints (thus including~\eqref{prob}), a worst-case analysis can be found in~\cite{gratton2019direct,hough2022model}, providing upper bounds on the maximum number of iterations and function evaluations needed to drive a criticality measure below a prespecified threshold.
In particular, in~\cite{gratton2019direct}, it is shown that at most $\bigO{(n \epsilon^{-2})}$ iterations and $\bigO{(n^2 \epsilon^{-2})}$ function evaluations are needed, for a (deterministic) direct-search method, to produce the first point with a criticality measure below $\epsilon>0$, matching the same complexity for the unconstrained case~\cite{dodangeh2016optimal,vicente2013worst}.
In~\cite{hough2022model}, similar bounds of $\bigO{(k_D^2 \epsilon^{-2})}$ iterations and $\bigO{(n k_D^2 \epsilon^{-2})}$ function evaluations are obtained, matching the same complexity for the unconstrained case~\cite{garmanjani2016trust}, with $k_D$ being a problem dimension-dependent constant which define a fully linear model.
Methods based on finite-difference gradient approximations may be used in this context. Such methods usually guarantee favorable worst-case evaluation complexity (e.g., $\bigO{(n \epsilon^{-2})}$ in the unconstrained case~\cite{grapiglia:2023}), but they can be inefficient in the presence of noise, unless carefully implemented~\cite{conn2009introduction,shi:2023}. Furthermore, concerning the complexity bound it can be noted that the constant defining the $\bigO{(\epsilon^{-2})}$ bound in \cite{grapiglia:2023} depends quadratically on the Lipschitz constant of the gradient whereas a linear dependence is typical for derivative-free methods. As discussed in \cite{vicente:13}, the Lipschitz constant of the gradient of the objective function can in turn depend exponentially on the dimension of the problem.  

In the current paper, we analyze a line search method to solve problem~\eqref{prob}. The algorithm under analysis is a modification of the one proposed in~\cite{lucidi2002global}, equipped with a line search technique described in~\cite{brilli2024worst}. In particular, for any considered direction (i.e., a vector of the canonical basis), the line search technique first checks for a sufficient decrease in the objective function using a given stepsize. Then, if such a decrease is obtained, an extrapolation (or expansion) phase starts, where increasingly larger values of the stespize are tried until some conditions are met.
This approach allows us to obtain complexity and identification results that extend those existing in the literature for direct-search~\cite{gratton2019direct} and model-based~\cite{hough2022model} methods.
In particular, let us summarize the main contributions of this paper below.

\begin{itemize}
\item The first contribution of the current paper is providing a worst-case analysis for the proposed line search method, which yields to the same bounds for direct search~\cite{gratton2019direct}, that is, $\bigO{(n \epsilon^{-2})}$ iterations and $\bigO{(n^2 \epsilon^{-2})}$ function evaluations to produce the first point with a criticality measure below $\epsilon>0$.
Additionally, for the proposed algorithm, we are able to bound the \textit{total} number of iterations where the criticality measure is not below $\epsilon$, thus going beyond the complexity results for direct-search methods given in~\cite{gratton2019direct}.

\item  The second contribution of the current paper is to show finite identification of the active constraints for the proposed line search method.
Such a property is usually desirable for an optimization algorithm
due to, among other things, the possibility of saving function evaluations if one recognizes the surfaces where a stationary points lies.
Furthermore, in several applications, we might be interested only in the identification of the surface containing an optimal solution (or its support).
In the literature, finite active-set identification was established
for smooth optimization algorithms and proximal methods (see, e.g.,~\cite{bertsekas1976goldstein,bomze2019first,burke1988identification,hare2004identifying,wright1993identifiable}), also providing complexity bounds in some cases~\cite{bomze2020active,cristofari2022active,nutini2019active}.
Moreover, many active-set approaches were designed for derivative-based optimization with bound constraints (see, e.g.,~\cite{birgin:2002,cristofari:2017,desantis:2012,hager:2006,yuan:2011,yuan:2015}).

In a derivative-free setting, parameter-dependent estimates were used in~\cite{gratton2011active,lewis2010active,lucidi2002objective}, allowing for finite identification of active constraints if certain conditions hold.
Moreover, finite identification results have been shown in~\cite{cristofari2021derivative} for a method using an inner approximation approach to minimize a function over the convex hull of a given set of vectors, meaning that, in finite time, the algorithm is able to identify, under appropriate assumptions, the vectors with zero weight in the convex combination representing the final solution.
Also note that even though the proposed algorithm, as highlighted above, gives the same complexity bound as the direct-search scheme analyzed in~\cite{gratton2019direct}, the latter does not guarantee finite active-set identification as the iterates may get very close to the boundary but never lie at the boundary.

Here, we show that the proposed algorithm correctly identifies the active constraints satisfying the strict complementarity condition in a finite number of iterations, without using any parameter-dependent estimate. Namely, this feature is just an intrinsic property of the proposed algorithm.
Such a result is obtained by using some tools from the analysis of derivative-based methods ~\cite{cristofari2022active,nutini2019active}. More specifically, we define a measure which represents the minimum strict complementarity among the active constraints, thereby providing a neighborhood of the limit points where the active-set identification holds.
Let us remark that also this identification result is obtained thanks to the extrapolation technique used in the line search procedure, which allows the stepsize to expand until we hit the border of the feasible set when we are in a neighborhood of a stationary point.
\end{itemize}

\subsection{Notations}
Given $l,u\in\R^n$, we denote $[l,u] = \{x\in\R^n:\ l_i\leq x_i\leq u_i,\ i=1,\dots,n\}$ the feasible set of problem \eqref{prob}. If ${\cal A} = \{a_1,\dots,a_p\}\subset\R^n$ is a (finite) set of vectors, we denote
\[
\cone({\cal A}) = \left\{x\in\R^n:\ x=\sum_{i=1}^p \beta_ia_i,\ \beta_i\geq 0,\ i=1,\dots,p\right\}.
\]
Given $v\in \R^n$ and a set $S\subseteq\R^n$, we denote by $v_S$ the projection of $v$ onto $S$. Given $\beta\in\R$, we indicate the sign of $\beta$ by $\sign(\beta)$, that is, $\sign(\beta)$ is $-1$ if $\beta<0$, $0$ if $\beta = 0$ and $1$ if $\beta > 0$.
Finally, $\|v\|$ denotes the $\ell_2$-norm of vector $v$.

The paper is organized as follows. In Section~\ref{sec2}, we define a line search algorithm for the solution of problem~\eqref{prob}. Section~\ref{sec3} is devoted to the analysis of the asymptotic convergence of the proposed algorithm, followed by the derivation of worst-case complexity bounds. In
Section~\ref{sec4}, we show finite active-set identification of the proposed algorithm. Finally, in Section~\ref{sec5}, we draw some conclusions.

\section{The algorithm}\label{sec2}
This section is concerned with the definition of a line search algorithm to solve problem \eqref{prob}.
The proposed method, denoted as Algorithm~\ref{alg:dfl}, is inspired by the method proposed in~\cite{lucidi2002derivative}
and uses some ideas from the one proposed in~\cite{brilli2024worst} for unconstrained problems.

At each iteration $k$, starting from the current iterate $x_k$, the algorithm sets $y_k^1 = x_k$ and explores the coordinate directions $\pm e_i$, $i=1,\ldots,n$, using stepsizes
$\nu_k^i = \max\{\tilde\alpha_k^i,c\Delta_k\}$, where
\[
\Delta_k = \max_{i=1,\dots,n}\{\tilde\alpha_k^i\},
\]
and the quantities $\tilde\alpha_k^i$, $i=1,\dots,n$, are \textit{tentative stepsizes}
updated throughout the iterations.
Then, the scheme produces \textit{actual stepsizes} $\alpha_k^i$ to compute intermediate points $y_k^{i+1} = y_k^i + \alpha_k^i d_k^i$, with $d_k^i \in \{\pm e_i\}$, $i=1,\dots,n$.
In particular, for any $y_k^i$, if $y_k^i+\nu_k^i d_k^i$ is infeasible (i.e., $y_k^i$ is too close to a bound) or does not provide a sufficient decrease in the objective function, then we set $\alpha_k^i = 0$ (i.e., $y_k^{i+1} = y_k^i$).
Otherwise,
a sufficient decrease in the objective function is achieved by moving along $d_k^i$ with
a feasible stepsize $\alpha_k^i$ determined by a line search procedure which will be described later.
Then, we set $x_{k+1} = y_k^{n+1}$ and prepare for the next iteration $k+1$.

As a final note regarding our proposed scheme, we refer to $k$ as a \textit{successful iteration} if $x_{k+1} \ne x_k$, indicating that at least one positive stepsize $\alpha_k^i$, $i = 1,\ldots,n$, has been computed. Conversely, we refer to $k$ as an \textit{unsuccessful iteration} if $x_{k+1} = x_k$, that is, if $\alpha_k^i = 0$ for all $i = 1,\ldots,n$.
Depending on whether an iteration $k$ is successful or not, we use specific rules to update the tentative stepsizes for the next iteration $k+1$.
In more detail, for a successful iteration $k$, each $\tilde\alpha_{k+1}^i$ is set to $\alpha_k^i$ if the latter is positive, whereas $\tilde\alpha_{k+1}^i$ is set to $\nu_k^i$ otherwise.
For an unsuccessful iteration $k$, each $\tilde\alpha_{k+1}^i$ is set to
$\theta\nu_k^i$, with $\theta \in (0,1)$.

\begin{algorithm}[ht]
\caption{Derivative-free line search algorithm}\label{alg:dfl}
\begin{algorithmic}[1]
\State{\bf given} $x_0\in[l,u]$, $\theta\in(0,1)$, $\delta\in(0,1)$, $\gamma > 0$, $c\in(0,1]$, $\tilde\alpha_0^i > 0$,  $i=1,\dots,n$
\For{$k=0,1,\dots$}
\State set $\Delta_k = \max_{i=1,\dots,n}\{\tilde\alpha_k^i\}$
\State set $y_k^1 = x_k$
\For{$i=1,\dots,n$}
\State set $\nu_k^i = \max\{\tilde\alpha_k^i,c\Delta_k\}$
\State compute $d_k^i$ and $\alpha_k^i$ by the {\tt line search}$(y_k^i,i,\gamma,\delta,\nu_k^i)$
\State set $y_k^{i+1} = y_k^i + \alpha_k^id_k^{i}$
\EndFor
\State set $x_{k+1} = y_k^{n+1}$
\If{$x_{k+1} \neq x_k$}
\State set $\tilde\alpha_{k+1}^i = \begin{cases} \alpha_k^i \quad \text{if } \alpha_k^i > 0 \\
        \nu_k^i \quad \text{otherwise} \end{cases} \quad i = 1,\ldots,n$
\Else
\State set $\tilde\alpha_{k+1}^i = \theta\nu_k^i$, $i=1,\dots,n$
\EndIf
\EndFor
\end{algorithmic}
\end{algorithm}

Given a feasible point $x$,
the exploration of the $i$th coordinate direction $e_i$ is performed by a line search procedure outlined in Algorithm~\ref{alg:ls}. First, we check if the given stepsize $\nu$ is feasible along $\pm e_i$, that is, if either $x + \nu e_i$ or $x - \nu e_i$ is feasible. If this is not the case, then we quit the line search returning a zero step length to indicate a failure.
Otherwise, we try to determine if one between $e_i$ and $-e_i$ is a ``good'' descent direction, that is, if a sufficient decrease in the objective function can be obtained by using a feasible stepsize. If neither $e_i$ nor $-e_i$ qualifies as a suitable descent direction, then the line search procedure terminates, returning a zero step length to indicate a failure.
Conversely, if a sufficient decrease of $f$ is obtained, then
an \textit{extrapolation} (or \textit{expansion}) phase starts (i.e., lines 12--15), where we try to increase the stepsize to the maximum extent while preserving feasibility and guaranteeing the sufficient decrease condition. Specifically, the \textit{while} loop keeps expanding the stepsize as long as the most recently accepted point remains strictly within the bounds (i.e., $\alpha < \alphamax$) and the new tentative point is sufficiently better than the last accepted one (i.e., $f(x + \omega d) \le f(x+\alpha d) - \gamma (\omega-\alpha)^2$).

Note that the proposed method is an adaptation of standard line search schemes with a few technical modifications introduced solely to obtain the desired complexity bounds. While numerical results illustrating the performance of line search variants are available in the literature~\cite{brilli2024worst,lucidi2002derivative}, here we focus on highlighting new theoretical properties about worst-case complexity and finite active-set identification of such a scheme.

\begin{algorithm}[ht]
\caption{line search($x$,$i$,$\gamma$,$\delta$,$\nu$)}
\label{alg:ls}
\begin{algorithmic}[1]
\If{$\nu > \max\{u_i-x_i,x_i-\ell_i\}$}
{return $d=e_i$, $\alpha = 0$}
\EndIf
\State set $\bar\alpha =\nu$
\If {$\bar{\alpha} \leq x_i - l_i$ and $f(x - \bar\alpha e_i) \le f(x) - \gamma \bar\alpha^2$}
\State {set $d = -e_i$, $\alphamax = x_i - l_i$ and go to line 12}
\EndIf
\If {$\bar{\alpha} \leq u_i - x_i$ and $f(x + \bar\alpha e_i) \le f(x) - \gamma \bar\alpha^2$}
\State set $d = e_i$, $\alphamax = u_i - x_i$ and go to line 12
\Else
\State return $d$ and $\alpha=0$
\EndIf
\State set $\alpha=\bar\alpha$ and $\omega = \min\{\alpha/\delta,\alphamax\}$
\While {\big($\alpha < \alphamax$ and $f(x + \omega d) \le f(x+\alpha d) - \gamma (\omega-\alpha)^2$\big)}\label{LS_step12}
\State set $\alpha = \omega$ and $\omega = \min\{\alpha/\delta,\alphamax\}$
\EndWhile\label{LS_step14}
\State {\bf return} $d$ and $\alpha$
\end{algorithmic}
\end{algorithm}

\section{Convergence and worst-case complexity}\label{sec3}
This section is devoted to the theoretical analysis of Algorithm~\ref{alg:dfl}.

\subsection{Assumptions and preliminary results}
First, let us define the following level set:
\[
\LS = \{x \in \R^n \colon f(x) \le f(x_0)\}.
\]

Throughout the paper, the following assumptions will be always considered satisfied, even if not explicitly invoked.

\begin{assumption}\label{assump1}
There exists an open convex set $\mathcal S \supseteq \LS$ such that the
objective function $f \colon \R^n \to \R$ is continuously differentiable with a Lipschitz continuous gradient $\nabla f$ with constant $L>0$ over $\mathcal S$, i.e.,
\[
\|\nabla f(x)-\nabla f(y)\| \leq L\|x-y\|\quad\forall x,y\in \mathcal S.
\]
Moreover, $f$ is bounded from below over $[l,u]$, i.e., a constant $f_{\min}\in\R$ exists such that
\[
f_{\min} \leq f(x)\quad\forall x\in[l,u].
\]
\end{assumption}
\begin{assumption}\label{assump2}
A constant $M_g\geq0$ exists such that
\[
\|\nabla f(x)\| \leq M_g
\]
for all $x\in \LS$.
\end{assumption}

Under Assumption~\ref{assump1}, we can also provide the following definition of coordinate-wise Lipschitz constants.
\begin{definition}\label{coord_lip}
The coordinate-wise Lipschitz constants $L_i>0$, $i=1,\ldots,n$, of $\nabla f$ are such that, for all $x\in\mathcal S$,
\[
\abs{\nabla_i f(x + se_i) - \nabla_i f(x)} \le L_i \abs{s} \quad \forall s\in\R\colon x+se_i \in [l,u], \quad i=1,\ldots,n,
\]
where the set $\mathcal S$ is defined as in Assumption~\ref{assump1}.
Moreover,
\begin{equation}\label{lmax}
\Lmax = \max_{i=1,\ldots,n} L_i.
\end{equation}
\end{definition}

Now, given $x\in[l,u]$,
let us introduce the following criticality measure:
\[
\chi(x) = \max_{\substack{x+d\in[l,u] \\ \|d\|\leq 1}} - \nabla f(x)^\top d.
\]
The above measure has been successfully used
in the analysis of some direct search methods for linearly constrained problems~\cite{gratton2019direct,kolda:03,kolda2007stationarity}.
It can be interpreted as the progress on a first-order model in a ball centered at $x$ with unit radius subject to feasibility constraints~\cite{conn2000trust,kolda2007stationarity}, thus generalizing $\|\nabla f(x)\|$ from the unconstrained setting.
Originally proposed in~\cite{conn1996convergence} for more general constraints and further analyzed in~\cite{conn2000trust}, $\chi(x)$ is continuous, non-negative and such that $\chi(x)=0$ if and only if $x$ is a KKT point. So, we can define a stationary point as follows.

\begin{definition}
A point $x^* \in [l,u]$ is said to be a stationary point of problem~\eqref{prob} if $\chi(x^*)=0$.
\end{definition}

Next, given $\epsilon \ge 0$, we define the set of $\epsilon$-active constraints at $x\in[l,u]$ as
\begin{align*}
I_l(x,\epsilon) & = \{i: x_i \leq l_i+\epsilon\},\\
I_u(x,\epsilon) & = \{i: x_i \geq u_i-\epsilon\}.
\end{align*}
Namely, $I_l(x,\epsilon)$ and $I_u(x,\epsilon)$ denote the sets of lower and upper bound constraints, respectively, that are nearly active at $x$ with a tolerance $\epsilon$.
Accordingly, let us define $N(x,\epsilon)$ as the $\epsilon$-normal cone generated by the $\epsilon$-active constraints, that is,
\[
N(x,\epsilon) = \cone\Big({\{-e_i, i\in I_l(x,\epsilon)\}\cup\{e_i, i\in I_u(x,\epsilon)\}\cup\{0\}\Big)},
\]
while the $\epsilon$-tangent cone $T(x,\epsilon)$ is the polar of $N(x,\epsilon)$, that is,
\[
T(x,\epsilon) = N(x,\epsilon)^\circ = \{d\in\R^n: d^\top v \leq 0,\ \forall\ v\in N(x,\epsilon)\}.
\]
The use of $\epsilon$-normal and $\epsilon$-tangent cones is a well known tool in the analysis of direct search methods applied to linearly constrained problems~\cite{gratton2019direct,kolda:03,kolda2007stationarity}. Essentially, the set $x + T(x,\epsilon)$ is an approximation of the feasible region near a feasible point $x$, that is,
moving from $x$ along any direction in $T(x,\epsilon)$ with a stepsize less than or equal to $\epsilon$ ensures that all constraints stay satisfied.

In our case, considering the structure of the feasible set of problem~\eqref{prob}, it is straightforward to verify that
a set of generators for $T(x,\epsilon)$ is given by
\begin{equation}\label{genTk}
G_{T(x,\epsilon)} =
\{-e_i, i\not\in I_l(x,\epsilon)\}\cup\{e_i, i\not\in I_u(x,\epsilon)\}\cup\{0\},
\end{equation}
that is, $T(x,\epsilon) = \cone{(G_{T(x,\epsilon)})}$.

The following two propositions from~\cite{kolda2007stationarity} show how $\chi(x_k)$ can be upper bounded
by using of the projection of $-\nabla f(x_k)$ onto $T(x_k,\epsilon)$ and $N(x_k,\epsilon)$.

\begin{proposition}[{\cite[Proposition 8.2]{kolda:03}}]\label{prop:chi_ub}
If $x\in[l,u]$, then for all $\epsilon\geq 0$ we have that
\[
\chi(x) \leq \|(-\nabla f(x))_{T(x,\epsilon)}\| + \epsilon\sqrt{n}\|(-\nabla f(x))_{N(x,\epsilon)}\|.
\]
\end{proposition}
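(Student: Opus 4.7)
The plan is to argue by duality, exploiting the fact that $T(x,\epsilon)$ and $N(x,\epsilon)$ are mutually polar closed convex cones. By Moreau's decomposition theorem, I can write
\[
-\nabla f(x) = g_T + g_N, \qquad g_T = (-\nabla f(x))_{T(x,\epsilon)},\ \ g_N = (-\nabla f(x))_{N(x,\epsilon)},
\]
where $g_T^\top g_N = 0$. For any admissible direction $d$ in the definition of $\chi(x)$ (that is, $x+d\in[l,u]$ and $\|d\|\le 1$), I split
\[
-\nabla f(x)^\top d \;=\; g_T^\top d \,+\, g_N^\top d
\]
and aim to bound the two terms separately; taking the supremum over such $d$ will then yield the claim.

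For the tangential term, a direct Cauchy–Schwarz inequality gives $g_T^\top d \le \|g_T\|\,\|d\| \le \|g_T\|$, which is already the first summand in the target bound. The real work is in the normal term, where I need to produce the factor $\epsilon\sqrt{n}$. Here I would use the explicit structure of $N(x,\epsilon)$: since it is generated by a subset of $\{\pm e_i\}_{i=1}^n$, the vector $g_N$ is supported coordinate-wise on $I_l(x,\epsilon)\cup I_u(x,\epsilon)$ with sign restrictions dictated by which of $\pm e_i$ lie in the cone. I would then perform a case analysis on each coordinate: for $i\in I_l(x,\epsilon)\setminus I_u(x,\epsilon)$ the component $(g_N)_i$ is non-positive, while feasibility forces $d_i\ge l_i-x_i\ge -\epsilon$; symmetrically for $i\in I_u(x,\epsilon)\setminus I_l(x,\epsilon)$; and for $i\in I_l(x,\epsilon)\cap I_u(x,\epsilon)$ (which can only occur when $u_i-l_i\le 2\epsilon$) feasibility directly gives $|d_i|\le\epsilon$. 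In every case the contribution is bounded by $\epsilon\,|(g_N)_i|$, so $g_N^\top d \le \epsilon\,\|g_N\|_1$.

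To convert the $\ell_1$ norm into the $\ell_2$ norm that appears on the right-hand side of the proposition, I would apply Cauchy–Schwarz once more, using that $g_N$ has at most $n$ nonzero components, to obtain $\|g_N\|_1 \le \sqrt{n}\,\|g_N\|$. Summing the two contributions and taking the maximum over feasible unit-norm $d$ yields the stated bound.

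The main obstacle is the coordinate-wise sign bookkeeping in the normal-cone bound, in particular handling the (generally ignored but possible) case $I_l(x,\epsilon)\cap I_u(x,\epsilon)\ne\emptyset$, where the $i$th component of $g_N$ is unconstrained in sign. Once one recognizes that in that regime the feasibility interval for $d_i$ is itself forced to have length at most $2\epsilon$, the estimate $|d_i|\le\epsilon$ makes the argument uniform across the three cases, and the rest is routine.
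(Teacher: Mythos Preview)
The paper does not supply its own proof of this proposition; it is quoted directly from the cited reference (Proposition~8.2 in~\cite{kolda:03}). Your argument via Moreau's decomposition is a correct self-contained proof of the stated inequality: the Cauchy--Schwarz bound on $g_T^\top d$ and the coordinate-wise sign analysis of $g_N^\top d$ are both valid, and the passage from $\|g_N\|_1$ to $\sqrt{n}\,\|g_N\|$ is just the standard $\ell_1$--$\ell_2$ inequality in $\R^n$. One minor point of precision: in the case $i\in I_l(x,\epsilon)\cap I_u(x,\epsilon)$ you write that the feasible interval for $d_i$ has length at most $2\epsilon$ and conclude $|d_i|\le\epsilon$; length alone would not give that, but in fact the interval $[l_i-x_i,\,u_i-x_i]$ is contained in $[-\epsilon,\epsilon]$ (since $x_i\le l_i+\epsilon$ gives $l_i-x_i\ge-\epsilon$, and $x_i\ge u_i-\epsilon$ gives $u_i-x_i\le\epsilon$), which is exactly what you need.
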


\begin{proposition}[{\cite[Proposition 8.1]{kolda:03}}]\label{prop:gd}
Given $\epsilon \ge 0$, let $G_{T(x,\epsilon)}$ be defined as in~\eqref{genTk}. If $(-\nabla f(x))_{T(x,\epsilon)}\neq 0$, then there exists $d\in G_{T(x,\epsilon)}$ such that
\[
\dfrac 1{\sqrt{n}}\|(-\nabla f(x))_{T(x,\epsilon)}\| \leq -\nabla f(x)^\top d.
\]
\end{proposition}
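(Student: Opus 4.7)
The plan is to exploit the coordinate-wise structure of $T(x,\epsilon)$ inherited from the axis-aligned generators in $G_{T(x,\epsilon)}$. Because every element of $G_{T(x,\epsilon)}$ is either $e_i$ or $-e_i$, I would first observe that $T(x,\epsilon)$ decomposes as a Cartesian product $T_1\times\cdots\times T_n$, with $T_i = \R$ when $i\notin I_l(x,\epsilon)\cup I_u(x,\epsilon)$, $T_i = [0,\infty)$ when $i\in I_l(x,\epsilon)\setminus I_u(x,\epsilon)$, $T_i = (-\infty,0]$ when $i\in I_u(x,\epsilon)\setminus I_l(x,\epsilon)$, and $T_i=\{0\}$ when $i$ lies in both sets. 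Consequently, orthogonal projection onto $T(x,\epsilon)$ is computed coordinate-wise.

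Writing $g=-\nabla f(x)$ and $v=g_{T(x,\epsilon)}$, the coordinate-wise projection formulas read $v_i = g_i$, $v_i = \max\{g_i,0\}$, $v_i = \min\{g_i,0\}$, or $v_i = 0$, according to the four cases above. The crucial consequence, which I would extract as a short sub-claim, is that whenever $v_i\neq 0$ one in fact has $v_i = g_i$, and moreover the unit vector $\sign(v_i)\,e_i$ belongs to $G_{T(x,\epsilon)}$: e.g., $v_i>0$ forces $T_i\neq(-\infty,0]$, hence $i\notin I_u(x,\epsilon)$, hence $e_i\in G_{T(x,\epsilon)}$, and symmetrically $v_i<0$ forces $-e_i\in G_{T(x,\epsilon)}$.

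The final step is a standard norm comparison. Since $v\neq 0$ by hypothesis, the elementary inequality $\|v\|_\infty \ge \|v\|/\sqrt{n}$ produces an index $i^*$ with $|v_{i^*}|\ge \|v\|/\sqrt{n}$, and by the observation above the vector $d=\sign(v_{i^*})\,e_{i^*}$ lies in $G_{T(x,\epsilon)}$. Then
\[
-\nabla f(x)^\top d = \sign(v_{i^*})\,g_{i^*} = \sign(v_{i^*})\,v_{i^*} = |v_{i^*}| \ge \frac{1}{\sqrt{n}}\|v\|,
\]
which is exactly the claimed bound.

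I do not anticipate any real obstacle: the argument is easy precisely because $G_{T(x,\epsilon)}$ consists of orthogonal coordinate vectors, so both the projection and the $\ell_2$-versus-$\ell_\infty$ comparison become elementary. The only spot requiring a little care is verifying the sign/membership compatibility in the middle paragraph, which must be checked in both sign cases. Were the generating directions not axis-aligned (e.g., under general linear constraints), one would have to replace this elementary step with a positive-spanning argument involving the cosine measure of the generating set, and the constant $1/\sqrt{n}$ would have to be adjusted accordingly.
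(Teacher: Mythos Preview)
Your proof is correct. Note, however, that the paper does not supply its own proof of this proposition: it is simply cited from \cite[Proposition~8.1]{kolda:03}, where the result is stated and proved for general linearly constrained problems using a cosine-measure argument for positive spanning sets. What you have written is a clean specialization to the bound-constrained case that exploits the axis-aligned structure of $G_{T(x,\epsilon)}$; this makes both the coordinate-wise projection and the $\ell_2$--$\ell_\infty$ comparison elementary, exactly as you observe in your closing remark. Your case analysis for the sign/membership compatibility is accurate, including the degenerate case $i\in I_l(x,\epsilon)\cap I_u(x,\epsilon)$ where $T_i=\{0\}$ and $v_i=0$, so no direction need be produced from that coordinate.
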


In the convergence analysis of Algorithm~\ref{alg:dfl}, Propositions~\ref{prop:chi_ub}--\ref{prop:gd} will allow us to relate $\chi(x_k)$ with $\Delta_{k+1}$ for every iteration $k$ (see Theorem~\ref{th:chi} below).
In particular, this will be obtained by applying the above results with $\epsilon = \Delta_k$ and using the following relation between $\Delta_k$ and $\Delta_{k+1}$.

\begin{lemma}\label{lemmalegame}
Let $\{\Delta_k\}$ be the sequence of maximum tentative stepsizes produced by Algorithm~\ref{alg:dfl}. Then, for all $k$ we have
\[
\Delta_{k+1}
\begin{cases}
\geq \Delta_k \quad & \text{if } x_{k+1} \neq x_k \\
= \theta\Delta_k \quad & \text{if } x_{k+1} = x_k.
\end{cases}
\]
Therefore, $\Delta_k\leq \Delta_{k+1}/\theta$ for all $k$.
\end{lemma}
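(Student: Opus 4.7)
The plan is to split the analysis according to whether iteration $k$ is successful or unsuccessful, handle each case directly from the update rule for $\tilde\alpha_{k+1}^i$, and then derive the concluding inequality $\Delta_k\leq\Delta_{k+1}/\theta$ as a corollary.

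\textbf{Unsuccessful case.} If $x_{k+1}=x_k$, the update rule gives $\tilde\alpha_{k+1}^i=\theta\nu_k^i=\theta\max\{\tilde\alpha_k^i,c\Delta_k\}$ for every $i$. Since $c\in(0,1]$ and $\Delta_k=\max_j\tilde\alpha_k^j$, we have $c\Delta_k\leq\Delta_k$, so
\[
\Delta_{k+1}=\theta\max_{i}\max\{\tilde\alpha_k^i,c\Delta_k\}=\theta\max_i\tilde\alpha_k^i=\theta\Delta_k,
\]
which is exactly the stated equality.

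\textbf{Successful case.} If $x_{k+1}\neq x_k$, I would argue componentwise that $\tilde\alpha_{k+1}^i\geq\tilde\alpha_k^i$ for every $i$, from which $\Delta_{k+1}\geq\Delta_k$ follows by taking the maximum. The key observation, which I expect to be the only non-trivial step, is that the line search (Algorithm~\ref{alg:ls}) never returns a positive stepsize smaller than its input $\nu_k^i$: inspection of the procedure shows that when the sufficient-decrease test succeeds, one sets $\alpha=\bar\alpha=\nu_k^i$ and then enters only the \emph{extrapolation} \texttt{while}-loop, which can only enlarge $\alpha$. Hence whenever $\alpha_k^i>0$ we have $\tilde\alpha_{k+1}^i=\alpha_k^i\geq\nu_k^i\geq\tilde\alpha_k^i$; and whenever $\alpha_k^i=0$ we directly have $\tilde\alpha_{k+1}^i=\nu_k^i\geq\tilde\alpha_k^i$ by definition of $\nu_k^i$. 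Taking the max over $i$ yields $\Delta_{k+1}\geq\Delta_k$.

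\textbf{Combined bound.} In the unsuccessful case the equality $\Delta_{k+1}=\theta\Delta_k$ gives $\Delta_k=\Delta_{k+1}/\theta$, while in the successful case $\Delta_{k+1}\geq\Delta_k>\theta\Delta_k$ (using $\theta\in(0,1)$) gives $\Delta_k<\Delta_{k+1}/\theta$. Hence $\Delta_k\leq\Delta_{k+1}/\theta$ in both cases, completing the proof. The only place any real verification is needed is the monotonicity property of the line search, and this is immediate from reading off its pseudocode; the rest is bookkeeping on the update rules.
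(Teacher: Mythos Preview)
Your proof is correct and follows essentially the same approach as the paper: both split into successful and unsuccessful cases, both use the key observation that the line search never returns a positive stepsize smaller than $\nu_k^i$, and both derive $\tilde\alpha_{k+1}^i\ge\tilde\alpha_k^i$ componentwise in the successful case. Your treatment of the unsuccessful case is slightly more compact (computing $\max_i\max\{\tilde\alpha_k^i,c\Delta_k\}=\Delta_k$ directly rather than bounding above and below separately as the paper does), but the argument is the same in substance.
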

\begin{proof}
First, let us consider the case where $x_{k+1} \ne x_k$ (i.e., $k$ is a successful iteration). In this case, the algorithm sets $\tilde\alpha_{k+1}^i = \alpha_k^i$ if $\alpha_k^i > 0$ and $\tilde\alpha_{k+1}^i = \nu_k^i$ if $\alpha_k^i = 0$. Since, from the line search procedure, $\alpha_k^i \geq\nu_k^i$ for all $i = 1,\ldots,n$, we can write
\[
\tilde\alpha_{k+1}^i \geq \nu_k^i \geq \tilde\alpha_k^i, \quad  i = 1,\ldots,n,
\]
where the last inequality follows from the definition of $\nu_k^i$. Then, using the definition of $\Delta_k$, we get $\Delta_{k+1}\geq \Delta_k$.

Now, let us consider the case where $x_{k+1} = x_k$ (i.e., $k$ is an unsuccessful iteration). In this case, the algorithm sets $\tilde\alpha_{k+1}^i = \theta\nu_k^i = \theta\max\{\tilde\alpha_k^i,c\Delta_k\}$. Namely, for all $i=1,\dots,n$, we have that
\begin{equation}\label{tildealfadef}
\tilde\alpha_{k+1}^i =
\begin{cases}
    \theta\tilde\alpha_k^i \quad & \text{if } \tilde\alpha_k^i \geq c\Delta_k, \\
    \theta c\Delta_k \quad & \text{if } \tilde\alpha_k^i < c\Delta_k.
\end{cases}
\end{equation}
From the definition of $\Delta_k$ and the fact that $c\in(0,1]$, it follows that
$\tilde\alpha_{k+1}^i \leq \theta\Delta_k$ for all $i = 1,\ldots,n$,
implying that
\begin{equation}\label{Delta_next}
\Delta_{k+1} = \max_{i=1,\ldots,n}{\tilde \alpha^i_{k+1}} \le \theta\Delta_k.
\end{equation}
Now, let $\bar\imath\in\{1,\dots,n\}$ be such that $\tilde\alpha_k^{\bar\imath} = \Delta_k$.
Since $c\in(0,1]$, we have $\tilde\alpha_k^{\bar\imath} \ge c\Delta_k$ and then, recalling~\eqref{tildealfadef}, we get
$\tilde\alpha_{k+1}^{\bar\imath} = \theta\tilde\alpha_k^{\bar\imath} = \theta\Delta_k$.
It follows from~\eqref{Delta_next} that
\[
\Delta_{k+1} = \tilde\alpha_{k+1}^{\bar\imath} = \theta\Delta_k,
\]
which concludes the proof.
\end{proof}

\subsection{Global convergence}

In the following proposition, we show that Algorithm \ref{alg:dfl} and Algorithm \ref{alg:ls} are well defined, i.e., Algorithm \ref{alg:ls} cannot cycle so that Algorithm \ref{alg:dfl} produces infinite sequences of points and stepsizes.

\begin{proposition}\label{th:conv1_LS}
Algorithm \ref{alg:dfl} is well defined, i.e., it produces infinite sequences $\{x_k\}$, $\{\alpha_k^i\}$, $\{\tilde\alpha_k^i\}$, $i=1,\dots,n$. \end{proposition}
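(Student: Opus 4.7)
The plan is to observe that, for each outer iteration $k$ of Algorithm~\ref{alg:dfl}, all steps besides the $n$ calls to Algorithm~\ref{alg:ls} amount to a fixed, finite sequence of assignments and comparisons. Hence, showing that Algorithm~\ref{alg:dfl} produces infinite sequences $\{x_k\}$, $\{\alpha_k^i\}$, $\{\tilde\alpha_k^i\}$ reduces to showing that every invocation of Algorithm~\ref{alg:ls} terminates in finitely many operations. Since every part of Algorithm~\ref{alg:ls} outside the expansion while loop at lines~\ref{LS_step12}--\ref{LS_step14} only involves a bounded number of function evaluations and elementary tests, the real task is to show that this while loop cannot cycle indefinitely.

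To handle it, I would first note that each pass through the loop body strictly increases $\alpha$: the update is $\alpha \leftarrow \omega = \min\{\alpha/\delta,\alphamax\}$, and since $\delta\in(0,1)$ and the guard $\alpha < \alphamax$ holds upon entry, we have both $\alpha/\delta > \alpha$ and $\alphamax > \alpha$, hence $\omega > \alpha$. In particular $(\omega-\alpha)^2 > 0$, so the guard's sufficient-decrease inequality provides a strictly positive decrement of the objective function at each pass. Moreover, the cap $\omega \le \alphamax$ ensures $x + \omega d \in [l,u]$ throughout the loop, so all intermediate iterates remain feasible.

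Two cases then arise. When $\alphamax < +\infty$, which covers every direction along a finite bound, the variable $\alpha$ either reaches $\alphamax$ (breaking the guard) or grows by the factor $1/\delta > 1$, so after at most $\lceil \log_{1/\delta}(\alphamax/\bar\alpha)\rceil+1$ passes we must have $\alpha=\alphamax$ and the loop exits. When instead $\alphamax = +\infty$ (only possible if some $l_i$ or $u_i$ is infinite), we get $\omega = \alpha/\delta$ at every pass, so $\alpha$ grows geometrically without bound; telescoping the successive sufficient-decrease inequalities produces a lower bound on the cumulative decrease of $f$ that scales like $\sum_{j\ge 0}\delta^{-2j}$ and therefore diverges. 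Since $f \ge f_{\min}$ on $[l,u]$ by Assumption~\ref{assump1} and all intermediate iterates are feasible, this cumulative decrease is bounded by $f(x) - f_{\min}$, forcing the number of passes to be finite.

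In either case the while loop terminates, so Algorithm~\ref{alg:ls} returns well-defined $d$ and $\alpha$; consequently Algorithm~\ref{alg:dfl} can always update $y_k^{i+1}$, set $x_{k+1}$, and advance to iteration $k+1$, yielding the claimed infinite sequences. The main obstacle I anticipate is precisely the $\alphamax = +\infty$ case, where the simple geometric-growth-up-to-a-cap argument fails and one must instead combine the sufficient-decrease condition with the global lower bound on $f$.
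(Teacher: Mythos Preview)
Your proposal is correct and follows essentially the same approach as the paper: reduce well-definedness to termination of the while loop in Algorithm~\ref{alg:ls}, then argue by geometric growth of $\alpha$ in the finite-$\alphamax$ case and by the contradiction between unbounded cumulative sufficient decrease and the lower bound $f\ge f_{\min}$ in the infinite-$\alphamax$ case. Your write-up is in fact slightly more careful than the paper's (you explicitly note feasibility of the intermediate points and give a quantitative pass-count bound), but the argument is the same.
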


\begin{proof} To prove that Algorithm \ref{alg:dfl} is well defined, we have to show that the line search procedure cannot infinitely cycle over steps \ref{LS_step12}--\ref{LS_step14} in the while loop.
Let us suppose, by contradiction, that the while loop does not terminate, i.e., we always have $\alpha < \alphamax$. At the $j$-th iteration of the while loop, we have
\[
\alpha = \frac{\bar\alpha}{\delta^j}
\]
with $\delta < 1$. If $\alphamax$ is finite, then we have $\alpha > \alphamax$, for $j$ sufficiently large, which is a contradiction. Otherwise, if $\alphamax = +\infty$, then, for every $j$ we have
\begin{equation}\label{condwhile}
f(x + \omega d) \le f(x+\alpha d) - \gamma (\omega-\alpha)^2,
\end{equation}
with $\omega = \alpha/\delta = \bar\alpha/\delta^{j+1}$.
Now, for $j$ sufficiently large, \eqref{condwhile} contradicts Assumption~\ref{assump1}, i.e., that $f$ is bounded from below on the feasible set.
\end{proof}

In the following, we present some results concerning the global convergence of Algorithm \ref{alg:dfl} to stationary points.
More specifically, by extending some results from~\cite{brilli2024worst,lucidi2002derivative}, we establish a relationship between $\nabla f(x_k)$, with specific directions $d$, and the largest tentative step length at $\Delta_{k+1}$.
We will show that the bound depends, besides on the problem dimension $n$, on the Lipschitz constant $L$ and the algorithm parameters $\gamma$, $\theta$ and $\delta$.

From now on, let us denote
\begin{equation}\label{Tk_Gk}
T_k := T(x_k,\Delta_k) \quad \text{and} \quad G_k := G(x_k,\Delta_k),
\end{equation}
where $G_k$ is the set of generators of $T_k$ and is defined as in~\eqref{genTk}.

\begin{theorem}\label{bound_grad}
Let $\{x_k\}$ be the sequence produced by Algorithm~\ref{alg:dfl}.
Then, for all $k$ and for all $d\in G_k$, we have that
\begin{equation}\label{grad_app}
-\nabla f(x_k)^\top d \le
\begin{cases}
\left(\dfrac{\gamma+L}{\delta}+L\sqrt{n}\right) \Delta_{k+1} \quad & \text{if } x_{k+1} \neq x_k, \\[1.2em]
\left(\dfrac{\gamma+\Lmax}{\theta}\right)\Delta_{k+1} \quad & \text{if } x_{k+1} = x_k.
\end{cases}
\end{equation}
\end{theorem}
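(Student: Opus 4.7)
The plan is to bound $-\nabla f(x_k)^\top d$ for each generator $d \in G_k$ by extracting information from the line search at the probing point $y_k^i$ and then transferring that information back to $x_k$ via Lipschitz continuity of $\nabla f$. Two preliminary facts drive everything. First, since $d=\pm e_i\in G_k$ means coordinate $i$ is not $\Delta_k$-active at $x_k$, and since $y_{k,i}^i = x_{k,i}$ (inner steps $1,\dots,i-1$ change only the other coordinates), the step $y_k^i+\nu_k^i d$ is feasible because $\nu_k^i\le\Delta_k$. Second, whenever $\alpha_k^j>0$ the update rule gives $\alpha_k^j=\tilde\alpha_{k+1}^j\le\Delta_{k+1}$, so by orthogonality of coordinate displacements $\|y_k^i-x_k\|\le\sqrt{n}\,\Delta_{k+1}$ in the successful case (and $y_k^i=x_k$ in the unsuccessful case).

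I would then split by whether iteration $k$ is unsuccessful or successful. In the unsuccessful case, failure of the line search along the feasible $d$ gives $f(x_k+\nu_k^i d)>f(x_k)-\gamma(\nu_k^i)^2$, and a mean-value argument with the coordinate Lipschitz constant $L_i\le\Lmax$ produces $-\nabla f(x_k)^\top d\le(\gamma+\Lmax)\nu_k^i$; combining with Lemma~\ref{lemmalegame} ($\Delta_k=\Delta_{k+1}/\theta$) and $\nu_k^i\le\Delta_k$ finishes that case. In the successful case I distinguish three subcases for $d=\pm e_i\in G_k$. If $\alpha_k^i=0$, the unsuccessful-case failure argument applied at $y_k^i$, together with a Lipschitz step of length $\sqrt{n}\,\Delta_{k+1}$, gives the bound (using $\nu_k^i=\tilde\alpha_{k+1}^i\le\Delta_{k+1}$). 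If $\alpha_k^i>0$ and $d=-d_k^i$, the initial sufficient decrease yields $\nabla f(\eta)^\top d_k^i\le-\gamma\bar\alpha$ on the short segment, hence $-\nabla f(\eta)^\top d\le 0$, and the Lipschitz transfer together with $\bar\alpha\le\alpha_k^i\le\Delta_{k+1}$ and $L\le(\gamma+L)/\delta$ absorbs everything into the target bound.

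The most delicate subcase is $\alpha_k^i>0$ with $d=d_k^i$, where the extrapolation loop exits either because the sufficient-decrease test fails at some $\omega=\min\{\alpha_k^i/\delta,\alphamax\}$ or because $\alpha_k^i$ reaches $\alphamax$. In the test-failure case a mean-value argument gives $-\nabla f(\xi)^\top d\le\gamma(\omega-\alpha_k^i)\le\gamma\alpha_k^i/\delta$ for some $\xi$ on the failing segment, and the Lipschitz transfer to $x_k$, with $\alpha_k^i\le\Delta_{k+1}$ and $\|y_k^i-x_k\|\le\sqrt{n}\,\Delta_{k+1}$, reproduces the constant $(\gamma+L)/\delta+L\sqrt{n}$. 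The main obstacle I expect is the exit-by-boundary case $\alpha_k^i=\alphamax$, where the only information is a chain of acceptances that directly yields \emph{lower} bounds on $-\nabla f(\cdot)^\top d$; recovering the required upper bound should rely on the last accepted expansion (from some $\alpha_{m-1}\ge\delta\alphamax$ to $\alphamax$), the elementary inequality $\alpha_k^i\le\Delta_{k+1}$, and the coordinate-wise Lipschitz constant, which together must reconstruct the same $(\gamma+L)/\delta+L\sqrt{n}$ constant.
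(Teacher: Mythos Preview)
Your overall decomposition (unsuccessful vs.\ successful; within successful, the three subcases $\alpha_k^i=0$, $\alpha_k^i>0$ with $d=-d_k^i$, and $\alpha_k^i>0$ with $d=d_k^i$) matches the paper's case split, and your handling of the first two subcases and of the ``test-failure'' exit in the third is essentially the paper's argument.

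There is, however, a genuine gap in the exit-by-boundary subcase ($\alpha_k^i>0$, $d=d_k^i$, $\alpha_k^i=\alphamax$). You write that ``recovering the required upper bound should rely on the last accepted expansion,'' but an \emph{acceptance} $f(y_k^i+\alphamax d)\le f(y_k^i+\alpha_{m-1}d)-\gamma(\alphamax-\alpha_{m-1})^2$ yields, via the mean-value theorem, $-\nabla f(\xi)^\top d\ge \gamma(\alphamax-\alpha_{m-1})>0$, i.e.\ a \emph{lower} bound on the very quantity you must bound from above. No Lipschitz transfer can reverse this sign. The paper's proof commits the same slip in its cases (1b) and (2b): from the last accepted step it derives $-\nabla_i f(\xi_k^i)\le 0$ and then claims a bound on $(-\nabla f(x_k))^\top(-e_i)=\nabla_i f(x_k)$, but transferring $-\nabla_i f(\xi)\le 0$ by Lipschitz continuity bounds $-\nabla_i f(x_k)$, not $+\nabla_i f(x_k)$.

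In fact the inequality is unrecoverable in this subcase. Take $n=1$, $f(x)=Mx$ on $[0,1]$ (so $L=\Lmax=0$), $x_0=\tfrac12$, $\tilde\alpha_0^1=\tfrac1{10}$, $\delta=\tfrac12$, $c=1$. Then $-e_1\in G_0$ (since $\tfrac12>l_1+\Delta_0=\tfrac1{10}$), the line search accepts $-e_1$ and extrapolates all the way to $l_1=0$, giving $\alpha_0^1=\tfrac12$ and $\Delta_1=\tfrac12$. The stated bound would force
\[
M \;=\; -\nabla f(x_0)^\top(-e_1)\;\le\;\Bigl(\tfrac{\gamma+L}{\delta}+L\sqrt{n}\Bigr)\Delta_1 \;=\; \gamma,
\]
which fails for any $M>\gamma$. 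So neither your plan nor the paper's argument can close this case as written; the statement, for \emph{all} $d\in G_k$, does not hold in the boundary-hit situation.
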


\begin{proof}
For all $k$, the result trivially holds for $d
 =0$. Now, consider an iteration $k$ such that $x_{k+1}\neq x_k$ (i.e., a successful iteration).
The following cases can occur, recalling that the analysis is limited to considering directions $\pm e_i$, $i=1,\ldots,n$, belonging to $G_k$.
\begin{itemize}
\item[(1a)] $e_i\in G_k$, $(y_k^{i+1})_i = l_i$ and $(y_k^i)_i = l_i$. Then, $\alpha_k^i=0$ and $\tilde\alpha_{k+1}^i=\nu_k^i$.
From the instructions of the line search procedure, we have that
\[
f(y_k^i +\tilde\alpha_{k+1}^ie_i) > f(y_k^i) - \gamma(\tilde\alpha_{k+1}^i)^2.
\]
By the mean value theorem, we have
\[
f(y_k^i + \tilde\alpha_{k+1}^ie_i)-f(y_k^i)= \tilde\alpha_{k+1}^i\nabla_i f(\xi_k^i),
\]
where $\xi_k^i = y_k^i + t_k^i\tilde\alpha_{k+1}^ie_i$ and $t_k^i\in(0,1)$. Then,
\[
-\nabla_i f(\xi_k^i) < \gamma {\tilde\alpha_{k+1}^i}.
\]
It follows that
\[
-\nabla_i f(\xi_k^i)+\nabla_i f(x_k)-\nabla_i f(x_k) <  \gamma {\tilde\alpha_{k+1}^i}
\]
and we can write
\begin{equation}\label{prop:baoundgr0}
\begin{split}
-\nabla_i f(x_k) & < \gamma{\tilde\alpha_{k+1}^i} -(\nabla_i f(x_k)-\nabla_i f(\xi_k^i)) \\
& \le
\gamma{\tilde\alpha_{k+1}^i} + L\|x_k-\xi_k^i\| \\
& \le \gamma{\tilde\alpha_{k+1}^i} +L\|x_k-y_k^i\| + L \|y_k^i-\xi_k^i\|.
\end{split}
\end{equation}
Moreover, since $t_k^i \in (0,1)$, we have
$t^i_k \tilde\alpha_{k+1}^i \le {\tilde\alpha_{k+1}^i}$. Then,
\[
\|y_k^i-\xi_k^i\| \le {\tilde\alpha_{k+1}^i}.
\]
Hence, since $\alpha_{k+1}^i \le \tilde \alpha_{k+1}^i \le \Delta_{k+1}$ for all $i=1,\dots,n$, and taking into account that $y_k^i = x_k + \sum_{j=1}^{i-1}\alpha_k^jd_k^j$, so that $\|x_k-y_k^i\|\leq \sqrt{n}\Delta_{k+1}$, we get
\begin{equation}\label{prop:baoundgr3}
(-\nabla f(x_k))^\top e_i <
\gamma{\tilde\alpha_{k+1}^i} +L\|x_k-y_k^i\| + L{\tilde\alpha_{k+1}^i}
 \le \left({\gamma+L}+L\sqrt{n}\right) \Delta_{k+1}.
\end{equation}

\item[(1b)]{ $-e_i\in G_k$, $(y_k^{i+1})_i = l_i$ and $(y_k^i)_i>l_i$ }. Then, $\tilde\alpha_{k+1}^i=\alpha_k^i > 0$. From the instructions of the line search procedure, there exists $\beta_k^i\in \{0, \delta \tilde\alpha_{k+1}^i\}$ such that
\[
f(y_k^i -\tilde\alpha_{k+1}^ie_i) \leq f(y_k^i -\beta_k^ie_i) - \gamma(\tilde\alpha_{k+1}^i-\beta_k^i)^2 \leq f(y_k^i -\beta_k^ie_i).
\]
By the mean value theorem, we have
\[
f(y_k^i -\tilde\alpha_{k+1}^ie_i) - f(y_k^i -\beta_k^ie_i) = -(\tilde\alpha_{k+1}^i-\beta_k^i)\nabla_i f(\xi_k^i),
\]
where $\xi_k^i = y_k^i -\beta_k^ie_i+ t_k^i(\beta_k^i-\tilde\alpha_{k+1}^i)e_i$ and $t_k^i\in(0,1)$. Then,
\[
-\nabla_i f(\xi_k^i) \leq 0.
\]
Moreover, from the definition of $\beta_k^i$ and the fact that $t^i_k \in (0,1)$, we have
$(1-t^i_k) \beta_k^i \le (1-t_k^i)\tilde \alpha_{k+1}^i$. Then,
\[
\|y_k^i-\xi_k^i\| = (1-t^i_k) \beta_k^i + t_k^i \tilde \alpha^i_{k+1} \le \tilde \alpha^i_{k+1}.
\]
Hence, by similar reasoning as in case (1a), we obtain
\begin{equation}\label{prop:baoundgr4}
(-\nabla f(x_k))^\top (-e_i)
\le \left(L+L\sqrt{n}\right) \Delta_{k+1}.
\end{equation}

\item[(2a)] $-e_i\in G_k$, $(y_k^{i+1})_i = u_i$ and $(y_k^i)_i = u_i$. Then, $\alpha_k^i=0$ and $\tilde\alpha_{k+1}^i=\nu_k^i$.
Reasoning as in case (1a), with minor differences, we get
\begin{equation}\label{prop:baoundgr3_u}
(-\nabla f(x_k))^\top (-e_i) \le \left({\gamma+L}+L\sqrt{n}\right) \Delta_{k+1}.
\end{equation}

\item[(2b)]$e_i\in G_k$, $(y_k^{i+1})_i = u_i$ and $(y_k^i)_i<u_i$. Then, $\tilde\alpha_{k+1}^i=\alpha_k^i > 0$.
Reasoning as in case (1b), with minor differences, we get
\begin{equation}\label{prop:baoundgr4_u}
(-\nabla f(x_k))^\top e_i
\le \left(L+L\sqrt{n}\right) \Delta_{k+1}.
\end{equation}

\item[(3a)] $\{\pm e_i\} \cap G_k\neq\emptyset$, $l_i<(y_k^{i+1})_i<u_i$ and $y_k^i = y_k^{i+1}$. Then, $\alpha_k^i = 0$ and $\tilde\alpha_{k+1}^i = \nu_k^i$.
From the instructions of the line search procedure, we have that
\begin{equation*}
\begin{split}
f(y_k^i+\tilde\alpha_{k+1}^ie_i) & > f(y_k^i) -\gamma(\tilde\alpha_{k+1}^i)^2 \quad\text{if}\ e_i\in G_k, \\
f(y_k^i-\tilde\alpha_{k+1}^ie_i) & > f(y_k^i) -\gamma(\tilde\alpha_{k+1}^i)^2\quad\text{if}\ -e_i\in G_k.
\end{split}
\end{equation*}
By the mean value theorem, we have
\begin{equation*}
\begin{split}
f(y_k^i+\tilde\alpha_{k+1}^ie_i) - f(y_k^i) & = \nabla_i f(\xi_k^i)\tilde\alpha_{k+1}^i\quad\text{if}\ e_i\in G_k, \\
 f(y_k^i-\tilde\alpha_{k+1}^ie_i) - f(y_k^i) & = - \nabla_i f(\bar \xi_k^i)\tilde\alpha_{k+1}^i\quad\text{if}\ -e_i\in G_k.
  \end{split}
\end{equation*}
where $\xi_k^i = y_k^i + t_k^i \tilde\alpha_{k+1}^ie_i$ and $\bar \xi_k^i = y_k^i - \bar t_k^i \tilde\alpha_{k+1}^ie_i$, with $t_k^i,\bar t_k^i\in(0,1)$. Then,
\begin{equation*}
 \begin{split}
 -\nabla_i f(\xi_k^i) & < \gamma \tilde\alpha_{k+1}^i\quad\text{if}\ e_i\in G_k, \\
 \nabla_i f(\bar \xi_k^i) & < \gamma \tilde\alpha_{k+1}^i\quad\text{if}\ -e_i\in G_k.
  \end{split}
\end{equation*}
and, recalling that $t^i_k, \bar t^i_k \in (0,1)$, we also have
\begin{equation*}
\begin{split}
\|y_k^i-\xi_k^i\| & = t_k^i \tilde\alpha_{k+1}^i\leq {\tilde\alpha_{k+1}^i}\quad\text{if}\ e_i\in G_k,\\
\|y_k^i-\bar\xi_k^i\| & = t_k^i \tilde\alpha_{k+1}^i\leq {\tilde\alpha_{k+1}^i}\quad\text{if}\ -e_i\in G_k.
\end{split}
\end{equation*}
Hence, reasoning as in case (1a) (i.e., using~\eqref{prop:baoundgr0} for $e_i$ and applying minor changes to~\eqref{prop:baoundgr0}, with $\xi_k^i$ replaced by $\bar \xi_k^i$, for $-e_i$), we obtain
\begin{equation}\label{prop:baoundgr3_ul}
\begin{split}
-\nabla f(x_k)^\top e_i&\le \left({\gamma+L}+L\sqrt{n}\right) \Delta_{k+1}\quad\text{if}\ e_i\in G_k,\\
-\nabla f(x_k)^\top (-e_i)&\le \left({\gamma+L}+L\sqrt{n}\right) \Delta_{k+1}\quad\text{if}\ -e_i\in G_k.
\end{split}
\end{equation}

\item[(3b)]$d_k^i\in G_k$, $l_i<(y_{k}^{i+1})_i<u_i$ and $y_k^i\neq y_{k}^{i+1}$. Then, $\alpha_k^i > 0$, $\tilde\alpha_{k+1}^i = \alpha_k^i$.
Let us assume that $d_k^i = -e_i$, i.e., $y_k^{i+1} = y_k^i-\tilde\alpha_{k+1}^ie_i$ (the proof for the case $d_k^i = e_i$ is identical, except for minor changes). Then, from the instructions of the line search procedure, there exist
$\beta_k^i$ and $\bar \beta_k^i$ such that
$\beta_k^i \in \{0,\delta\tilde\alpha_{k+1}^i\}$, $\tilde\alpha_{k+1}^i<\bar\beta_k^i\leq\tilde\alpha_{k+1}^i/\delta$ and
\begin{equation*}
\begin{split}
    f(y_k^i-\tilde\alpha_{k+1}^ie_i) & \leq f(y_k^i-\beta_k^ie_i) -\gamma(\tilde\alpha_{k+1}^i-\beta_k^i)^2 \le f(y_k^i-\beta_k^ie_i), \\
    f\left(y_k^i-\bar\beta_k^ie_i\right) & > f(y_k^i-\tilde\alpha_{k+1}^ie_i) -\gamma\left(\bar\beta_k^i-\tilde\alpha_{k+1}^i\right)^2.
\end{split}
\end{equation*}
By the mean value theorem, we have
\begin{equation*}
\begin{split}
     f(y_k^i-\tilde\alpha_{k+1}^ie_i) - f(y_k^i-\beta_k^ie_i) & = -(\tilde\alpha_{k+1}^i-\beta_k^i)\nabla_i f(\xi_k^i),\\
    f\left(y_k^i-\bar\beta_k^ie_i\right) - f(y_k^i-\tilde\alpha_{k+1}^ie_i) & = -(\bar\beta_k^i-\tilde\alpha_{k+1}^i) \nabla_i f(\bar\xi_k^i),
\end{split}
\end{equation*}
where $\xi_k^i = y_k^i -\beta_k^ie_i- t_k^i (\tilde\alpha_{k+1}^i-\beta_k^i)e_i$ and $\bar\xi_k^i = y_k^i -\tilde\alpha_{k+1}^ie_i- \bar t_k^i (\bar\beta_k^i-\tilde\alpha_{k+1}^i)e_i$, with $t_k^i,\bar t_k^i\in(0,1)$.
Then,
\begin{equation*}
\begin{split}
     -\nabla_i f(\xi_k^i) & \le 0, \\
    \nabla_i f(\bar\xi_k^i) & < \gamma(\bar\beta_k^i-\tilde\alpha_{k+1}^i).
\end{split}
\end{equation*}
Moreover, from the definition of $\bar{\beta}_k^i$, it follows that
$0 \le \bar{\beta}_k^i - \tilde\alpha_{k+1}^i \le (1/\delta-1)\tilde\alpha_{k+1}^i \le \tilde\alpha_{k+1}^i/\delta$, where we have used the fact that $\delta \in (0,1)$. Then,
\begin{equation*}
\begin{split}
     -\nabla_i f(\xi_k^i) & \leq 0, \\
    \nabla_i f(\bar\xi_k^i) & < \gamma \left(\dfrac{1-\delta}{\delta} \right) \tilde\alpha_{k+1}^i \leq \gamma \frac{\tilde\alpha_{k+1}^i}{\delta},
\end{split}
\end{equation*}
and, recalling that $t_k^i, \bar t_k^i \in (0,1)$, we also have
\begin{align*}
\|y_k^i-\xi_k^i\| & = \beta_k^i+t_k^i(\tilde\alpha_{k+1}^i-\beta_k^i)\leq \tilde\alpha_{k+1}^i \le \frac{\tilde\alpha_{k+1}^i}{\delta},\\
\|y_k^i-\bar\xi_k^i\| & = \tilde\alpha_{k+1}^i+t_k^i(\bar\beta_k^i-\tilde\alpha_{k+1}^i) \leq \bar \beta_k^i \leq \frac{\tilde\alpha_{k+1}^i}{\delta}.
\end{align*}
Hence, reasoning as in the previous case, we obtain
\begin{equation}\label{prop:baoundgr6}
\begin{split}
-\nabla f(x_k)^\top (-e_i)&= \nabla_i f(x_k) \le \left(\dfrac{\gamma+L}{\delta}+L\sqrt{n}\right) \Delta_{k+1},\\
-\nabla f(x_k)^\top e_i&= -\nabla_i f(x_k) \le \left(\dfrac{\gamma+L}{\delta}+L\sqrt{n}\right) \Delta_{k+1}.
\end{split}
\end{equation}
\end{itemize}
Hence, from~\eqref{prop:baoundgr3}, \eqref{prop:baoundgr4}, \eqref{prop:baoundgr3_u}, \eqref{prop:baoundgr4_u}, \eqref{prop:baoundgr3_ul} and~\eqref{prop:baoundgr6}, we conclude that

\[
-\nabla f(x_k)^\top d \le \left(\dfrac{\gamma+L}{\delta}+L\sqrt{n}\right) \Delta_{k+1} \quad \forall d\in G_k.
\]

Now, let us analyze an iteration $k$ such that $x_{k+1}= x_k$ (i.e., an unsuccessful iteration). In such a case, only cases (1a), (2a) and (3a) can occur, although we have to consider that $y_k^i = x_k$ and $\tilde \alpha_{k+1}^i = \theta \nu_k^i$.
Hence, replacing $L$ with $L_i$, we have that the following relations hold:
\begin{align*}
-\nabla f(x_k)^\top e_i & \le \frac{\gamma+\Lmax}{\theta} \Delta_{k+1}\quad\text{if}\ e_i\in G_k, \\
-\nabla f(x_k)^\top (-e_i) & \le \frac{\gamma+\Lmax}{\theta} \Delta_{k+1}\quad\text{if}\ -e_i\in G_k.
\end{align*}
As above, we finally conclude
\[
-\nabla f(x_k)^\top d \le \frac{\gamma+\Lmax}{\theta} \Delta_{k+1} \quad \forall d\in G_k.
\]
\end{proof}

\begin{remark}\label{rem:ls}
The result expressed in Theorem~\ref{bound_grad}
strongly relies on the extrapolation phase of the line search procedure (i.e., lines 12--15 of Algorithm~\ref{alg:ls}).
In particular, since we quit the expansion with a failure in the objective decrease when we do not hit the border of the feasible set, then we are able to upper bound $\chi(x_k)$ for all iterations, including the successful ones.
This represents a relevant difference over direct-search~\cite{gratton2019direct} methods, where $\chi(x_k)$ is usually upper bounded only for the unsuccessful iterations.
Moreover, this property will allow us to give a complexity bound on the total number of iterations where $\chi(x_k)$ is above a prespecified threshold (see Theorem~\ref{th:complexity_iter} below).
\end{remark}

Combining Proposition~\ref{prop:gd} and Theorem~\ref{bound_grad}, it is now straightforward to relate $\chi(x_k)$ with $\Delta_{k+1}$, as stated in the following result.

\begin{theorem}\label{th:chi}
Let $\{x_k\}$ be the sequence of points produced by Algorithm~\ref{alg:dfl}. Then,
\[
\chi(x_k) \le
\begin{cases}
\sqrt{n}\left(\dfrac{\gamma+L}{\delta} + L\sqrt{n} +\dfrac{M_g}{\theta}\right)\Delta_{k+1} \quad & \text{if } x_{k+1} \ne x_k, \\[1.2em]
\sqrt{n}\left(\dfrac{\gamma+\Lmax+M_g}{\theta}\right)\Delta_{k+1} \quad & \text{if } x_{k+1} = x_k.
\end{cases}
\]
\end{theorem}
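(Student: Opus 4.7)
The strategy is to set $\epsilon = \Delta_k$ in Proposition~\ref{prop:chi_ub}, which gives the decomposition
\[
\chi(x_k) \le \|(-\nabla f(x_k))_{T_k}\| + \Delta_k\sqrt{n}\,\|(-\nabla f(x_k))_{N(x_k,\Delta_k)}\|,
\]
and then to bound each of the two terms on the right-hand side separately in terms of $\Delta_{k+1}$. The conversion from $\Delta_k$ to $\Delta_{k+1}$ at the end will be handled uniformly by Lemma~\ref{lemmalegame}, which guarantees $\Delta_k \le \Delta_{k+1}/\theta$ in both cases.

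For the tangent-cone term, I would apply Proposition~\ref{prop:gd}: either $(-\nabla f(x_k))_{T_k}=0$, in which case the term vanishes, or there exists $d\in G_k$ with $\|(-\nabla f(x_k))_{T_k}\| \le \sqrt{n}\,(-\nabla f(x_k))^\top d$. In either case, picking the maximizing direction in $G_k$ and invoking Theorem~\ref{bound_grad} immediately yields
\[
\|(-\nabla f(x_k))_{T_k}\| \le \sqrt{n}\left(\tfrac{\gamma+L}{\delta}+L\sqrt{n}\right)\Delta_{k+1}
\]
if $k$ is successful, and $\|(-\nabla f(x_k))_{T_k}\| \le \sqrt{n}\,\tfrac{\gamma+\Lmax}{\theta}\Delta_{k+1}$ if $k$ is unsuccessful. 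For the normal-cone term, I would use the trivial bound $\|v_{N(x_k,\Delta_k)}\| \le \|v\|$ for any projection onto a convex cone, together with Assumption~\ref{assump2} (noting that $x_k\in\LS$ because $f(x_{k+1})\le f(x_k)$ by construction of the line search), giving
\[
\|(-\nabla f(x_k))_{N(x_k,\Delta_k)}\| \le \|\nabla f(x_k)\| \le M_g.
\]

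Finally, I would multiply this last inequality by $\Delta_k\sqrt{n}$ and use $\Delta_k \le \Delta_{k+1}/\theta$ from Lemma~\ref{lemmalegame} to convert the normal-cone contribution into $\sqrt{n}\,(M_g/\theta)\Delta_{k+1}$ in both branches. Adding the two contributions yields the two cases of the stated bound. There is no real obstacle here: the whole argument is a direct assembly of Propositions~\ref{prop:chi_ub}--\ref{prop:gd}, Theorem~\ref{bound_grad}, Assumption~\ref{assump2}, and Lemma~\ref{lemmalegame}, with the only subtlety being the uniform use of $\Delta_k\le\Delta_{k+1}/\theta$ (which is loose in the successful case but keeps the statement compact).
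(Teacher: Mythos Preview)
Your proposal is correct and follows essentially the same route as the paper's proof: apply Proposition~\ref{prop:chi_ub} with $\epsilon=\Delta_k$, bound the normal-cone term via $\|(-\nabla f(x_k))_{N(x_k,\Delta_k)}\|\le\|\nabla f(x_k)\|\le M_g$ and Lemma~\ref{lemmalegame}, then control the tangent-cone term through Proposition~\ref{prop:gd} and Theorem~\ref{bound_grad}. Your explicit handling of the case $(-\nabla f(x_k))_{T_k}=0$ and your remark that the uniform use of $\Delta_k\le\Delta_{k+1}/\theta$ is loose in the successful branch are both accurate observations that the paper leaves implicit.
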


\begin{proof}
Using Proposition~\ref{prop:chi_ub} with $\epsilon = \Delta_k$ and
Lemma~\ref{lemmalegame}, we obtain
\[
\chi(x_k) \leq \|(-\nabla f(x_k))_{T_k}\| + \frac{\Delta_{k+1}}{\theta}\sqrt{n}M_g,
\]
where we have used the fact that
$\|(-\nabla f(x_k))_{N(x_k,\epsilon)}\| \le \|\nabla f(x_k)\| \le M_g$, where the last inequality follows from Assumption~\ref{assump2}.
So, using Proposition~\ref{prop:gd}, it follows that there exists a direction $d\in G_k$ such that
\[
\chi(x_k) \leq \sqrt{n}\left(-\nabla f(x_k)^\top d + \frac{M_g}{\theta}\Delta_{k+1}\right).
\]
The desired result hence follows from Theorem~\ref{bound_grad}.
\end{proof}

In order to get convergence to stationary points and provide worst-case complexity bounds for the proposed algorithm,
for each iteration $k$ let us define
\begin{equation}\label{Phi_def}
\Phi_k = f(x_k) + {\eta} \Delta_k^2,
\end{equation}
{where $\eta$ satisfies
\begin{equation}\label{eq:eta_ineq}
    0 < \eta < \gamma(\delta(1-\delta))^2.
\end{equation}
}
Recalling Assumption~\ref{assump1}, note that
\begin{equation}\label{eq:lowerbound_Phi}
\Phi_k \geq f_{\min} \quad \forall k \ge 0.
\end{equation}
Now, in the next theorem, we bound the difference $\Phi_{k} -\Phi_{k-1}$ for each iteration $k$.

\begin{theorem}\label{teo:Phi}
Let $\{x_k\}$ be the sequence produced by Algorithm~\ref{alg:dfl}. Then,
\begin{equation}\label{eq:bound_Phikkm1}
\Phi_{k} -\Phi_{k-1} \leq -c_1 \Delta_k^2 \quad \forall k \ge 1,
\end{equation}
 with
\begin{equation}\label{ctilde}
 c_1 = \min \left\{ \gamma c^2, \gamma(\delta(1-\delta))^2-\eta,\eta\left(  \frac{1 - \theta^2}{\theta^2}\right) \right\} > 0,
\end{equation}
where $c$ and $\theta$ are defined in Algorithm~\ref{alg:dfl}.
\end{theorem}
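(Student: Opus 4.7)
The plan is to split the analysis according to whether iteration $k-1$ is successful ($x_k \neq x_{k-1}$) or unsuccessful ($x_k = x_{k-1}$). The unsuccessful branch is essentially by inspection: $f(x_k) = f(x_{k-1})$ and Lemma~\ref{lemmalegame} gives $\Delta_k = \theta \Delta_{k-1}$, hence
\[
\Phi_k - \Phi_{k-1} \;=\; \eta\bigl(\Delta_k^2 - \Delta_{k-1}^2\bigr) \;=\; -\eta\,\frac{1-\theta^2}{\theta^2}\,\Delta_k^2,
\]
which accounts for the third term in the definition of $c_1$.

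For a successful iteration I would first isolate two per-direction sufficient-decrease inequalities that hold whenever $\alpha_{k-1}^i > 0$, namely
\[
\text{(I)}\;\; f(y_{k-1}^i)-f(y_{k-1}^{i+1}) \ge \gamma(\nu_{k-1}^i)^2, \qquad \text{(II)}\;\; f(y_{k-1}^i)-f(y_{k-1}^{i+1}) \ge \gamma\bigl(\delta(1-\delta)\bigr)^2 (\alpha_{k-1}^i)^2.
\]
Bound~(I) is immediate from the initial acceptance $f(x+\bar\alpha d)\le f(x)-\gamma\bar\alpha^2$ together with the monotone nonincrease enforced by the extrapolation condition. Bound~(II) is more delicate: writing $\omega_0 = \bar\alpha, \omega_1, \ldots, \omega_J = \alpha_{k-1}^i$ for the accepted iterates of the while loop, I would split on how the loop terminated. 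If it ended by failure of the sufficient-decrease check, then the last step was non-truncated, so $\omega_{J-1} = \delta\,\omega_J$ and the final contribution $\gamma(\omega_J-\omega_{J-1})^2$ equals $\gamma(1-\delta)^2(\alpha_{k-1}^i)^2$. If instead it ended at $\omega_J = \alphamax$, then for $J\ge 2$ the penultimate step is automatically non-truncated (since $\omega_{J-1} < \alphamax$) and the estimate $\omega_{J-1} > \delta\,\alpha_{k-1}^i$ yields $\gamma(\omega_{J-1}-\omega_{J-2})^2 > \gamma(\delta(1-\delta))^2(\alpha_{k-1}^i)^2$; for $J=1$ the initial bound $\gamma\bar\alpha^2 \ge \gamma\delta^2(\alpha_{k-1}^i)^2$ already dominates $\gamma(\delta(1-\delta))^2(\alpha_{k-1}^i)^2$ since $\delta \ge \delta(1-\delta)$.

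Summing over $i$ and choosing any $i^*$ with $\tilde\alpha_k^{i^*} = \Delta_k$, I would split the successful case once more. If $\alpha_{k-1}^{i^*} > 0$ then $\Delta_k = \alpha_{k-1}^{i^*}$ and~(II) gives $f(x_{k-1})-f(x_k) \ge \gamma(\delta(1-\delta))^2 \Delta_k^2$; using $\eta(\Delta_k^2 - \Delta_{k-1}^2) \le \eta\Delta_k^2$ (via $\Delta_{k-1}\le\Delta_k$ from Lemma~\ref{lemmalegame}) this produces $\Phi_k - \Phi_{k-1} \le -(\gamma(\delta(1-\delta))^2 - \eta)\Delta_k^2$, which is negative thanks to~\eqref{eq:eta_ineq}. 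Otherwise $\alpha_{k-1}^{i^*} = 0$, so $\Delta_k = \nu_{k-1}^{i^*} \le \Delta_{k-1}$; combined with $\Delta_k \ge \Delta_{k-1}$ this forces $\Delta_k = \Delta_{k-1}$ and $\Delta_k^2 - \Delta_{k-1}^2 = 0$, after which picking any $j$ with $\alpha_{k-1}^j > 0$ (such a $j$ exists by successfulness) and applying~(I) with $\nu_{k-1}^j \ge c\Delta_{k-1} = c\Delta_k$ yields $\Phi_k - \Phi_{k-1} \le -\gamma c^2 \Delta_k^2$. Collecting the three bounds recovers $c_1$ as the minimum, whose positivity follows from $c\in(0,1]$, $\theta\in(0,1)$, and~\eqref{eq:eta_ineq}. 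I expect the main obstacle to be the truncated branch of bound~(II), where the naive final-step contribution $\gamma(1-\delta)^2(\alpha_{k-1}^i)^2$ is unavailable and one has to back up one step in the extrapolation chain and carefully compare $\omega_{J-1}$ to $\alpha_{k-1}^i = \alphamax$.
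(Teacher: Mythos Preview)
Your proposal is correct and follows essentially the same route as the paper: the same three-way split (unsuccessful; successful with $\Delta_k=\Delta_{k-1}$; successful with $\Delta_k>\Delta_{k-1}$), and your derivation of bound~(II) via the case analysis on how the extrapolation loop terminated mirrors the paper's analysis of the expansion count $h_{k-1}^{\bar\imath}$ almost line by line. The only cosmetic difference is that you organize the successful sub-cases by whether $\alpha_{k-1}^{i^*}>0$ rather than by whether $\Delta_k>\Delta_{k-1}$ (your second sub-case forces $\Delta_k=\Delta_{k-1}$, as you observe), and you should add a one-line remark that bound~(II) follows trivially from~(I) when $J=0$, since then $\alpha_{k-1}^i=\bar\alpha=\nu_{k-1}^i$.
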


\begin{proof}
For each iteration $k\ge 1$, consider the iteration $k-1$. The following cases can occur.
\begin{itemize}
\item $x_k \neq x_{k-1}$ (i.e., $k-1$ is a successful iteration) and
$\Delta_k = \Delta_{k-1}$.

Hence, there exists a coordinate ${\bar \imath}$ such that
we have moved from $x_{k-1}$ along $\pm e_{\bar \imath}$ and we have
\[
  \alpha_{k-1}^{\bar\imath} \geq \nu_{k-1}^{\bar\imath} = \max\left\{\tilde\alpha_{k-1}^{\bar\imath},c\Delta_{k-1}\right\} \geq c\Delta_{k-1} = c\Delta_k.
\]

Then, from the line search procedure, we can write
\[
f(x_k) \leq f(x_{k-1}) -\gamma c^2 \Delta_k^2.
\]
Hence, we have
{
\begin{equation}\label{bound_Phik_case1}
\Phi_k - \Phi_{k-1} = f(x_k) - f(x_{k-1}) + \eta \left(\Delta_k^2 - \Delta_{k-1}^2 \right) \leq - \gamma c^2 \Delta_k^2.
\end{equation}
}

\item $x_k \neq x_{k-1}$ (i.e., $k-1$ is a successful iteration) and $\Delta_k > \Delta_{k-1}$.
Hence, there exists a coordinate ${\bar \imath}$ such that
\[
\alpha_{k-1}^{\bar\imath} = \Delta_k.
\]
{{
Let $h_{k-1}^i$ be the number of times the stepsize related to the $i$th coordinate is expanded on iteration $k-1$, that is, if $\alpha_k^i>0$
\[
\alpha_{k-1}^i = \min\{\alphamax,\delta^{-h_{k-1}^i}\nu_{k-1}^i\},
\]
where $\alphamax$ is the distance to the bound of the $i$th coordinate over $d_{k-1}^i$. Now, considering $\bar{\imath}$, we have
\[
\alpha_{k-1}^{\bar\imath}>\Delta_{k-1}\geq\nu_{k-1}^{\bar\imath}.
\]
In particular, the first inequality implies that $h_{k-1}^{\bar\imath}\geq1$ (otherwise we would have $\alpha_{k-1}^{\bar\imath}\in\{0,\nu_{k-1}^{\bar\imath}\}$).
If $h_{k-1}^{\bar\imath}=1$, then we have
\[
\Delta_k = \alpha_{k-1}^{\bar\imath}=\min\{\alphamax,\delta^{-1}\nu_{k-1}^{\bar\imath}\}\leq \delta^{-1}\nu_{k-1}^{\bar\imath},
\]
that is,
\[
\nu_{k-1}^{\bar\imath} \geq \delta \Delta_k.
\]
Thus, we get
\begin{equation}\label{eq:case_h1}
    f(x_k)\leq f(x_{k-1})-\gamma(\nu_{k-1}^{\bar\imath})^2\leq f(x_{k-1})-\gamma\delta^2\Delta_k^2.
\end{equation}
Let us now consider $h_{k-1}^{\bar\imath}\geq 2$. We have
\[
\Delta_k=\alpha_{k-1}^{\bar \imath}=\min\{\alphamax,\delta^{-h_{k-1}^{\bar\imath}}\nu_{k-1}^{\bar \imath}\}\leq \delta^{-h_{k-1}^{\bar\imath}}\nu_{k-1}^{\bar \imath},
\]
that is,
\begin{equation}\label{eq:delta_for_hplus2}
    \delta^{-(h_{k-1}^{\bar \imath}-1)}\nu_{k-1}^{\bar \imath}\geq\delta\Delta_k.
\end{equation}
Therefore, we can write
\begin{equation}\label{eq:case_h2plus}
    \begin{split}
        \displaystyle f(x_k)&\leq f(x_{k-1})-\gamma\left(\delta^{-(h_{k-1}^{\bar \imath}-1)}\nu_{k-1}^{\bar \imath}-\delta^{-(h_{k-1}^{\bar \imath}-2)}\nu_{k-1}^{\bar \imath}\right)^2\\
        &= f(x_{k-1})-\gamma\left(\delta^{-(h_{k-1}^{\bar \imath}-1)}\nu_{k-1}^{\bar \imath}(1-\delta)\right)^2\\
        &\leq f(x_{k-1})-\gamma\left(\delta(1-\delta)\right)^2\Delta_k^2,
    \end{split}
\end{equation}
where the last inequality follows from~\eqref{eq:delta_for_hplus2}.
Finally, using~\eqref{eq:case_h1} and~\eqref{eq:case_h2plus}, and considering that $\min\{(1-\delta)^2,(\delta(1-\delta))^2\}= (\delta(1-\delta))^2$, we can write
\[
f(x_k) \leq f(x_{k-1}) -\gamma(\delta(1-\delta))^2\Delta_k^2.
\]
Hence, we have
\begin{equation}\label{bound_Phik_case2}
\begin{split}
\Phi_k - \Phi_{k-1} & = f(x_k) - f(x_{k-1}) + \eta \left(\Delta_k^2 - \Delta_{k-1}^2 \right)  \\
  &\leq -\gamma(\delta(1-\delta))^2\Delta_k^2 + \eta\Delta_k^2 \\
  & = - \left(\gamma(\delta(1-\delta))^2-\eta\right)\Delta_k^2.
\end{split}
\end{equation}
}}

\item $x_k = x_{k-1}$ (i.e., $k-1$ is an unsuccessful iteration). By Lemma \ref{lemmalegame}, we have
\[
\Delta_k = \theta \Delta_{k-1}.
\]
Then,
\begin{equation}\label{bound_Phik_case3}
\begin{split}
\Phi_k - \Phi_{k-1} &= f(x_k)  - f(x_{k-1}) + \eta\left( \Delta_k^2 - \Delta_{k-1}^2\right)  \\
  &= \eta\left( \Delta_k^2 -\frac{1}{\theta^2}\Delta_k^2\right) \\
  & = -\eta\left(  \frac{1 - \theta^2}{\theta^2}\right)\Delta_k^2.
\end{split}
\end{equation}
\end{itemize}
Finally, we get \eqref{eq:bound_Phikkm1} by combining~\eqref{bound_Phik_case1}, \eqref{bound_Phik_case2} and~\eqref{bound_Phik_case3}, where $c_1 > 0$ since $c\in (0,1]$.
\end{proof}

Using the above result, we can easily show the convergence to zero of the sequences of tentative and actual stepsizes produced by the algorithm, i.e., $\{\tilde \alpha_k^i\}$, $\{\alpha_k^i\}$, $i=1,\dots,n$, respectively,
together with the convergence to zero of the sequence of maximum tentative stepsizes $\{\Delta_k\}$.

\begin{proposition}\label{prop:conv1}
Let $\{\tilde\alpha_k^i\}$, $\{\alpha_k^i\}$, $i=1,\dots,n$, and $\{\Delta_k\}$ be the sequences produced by Algorithm~\ref{alg:dfl}. We have that
\begin{align}
\lim_{k \to \infty} \tilde \alpha^i_k = 0, & \quad i=1,\ldots,n; \label{alfatildetozero}\\
\lim_{k \to \infty} \alpha_k^i = 0, & \quad i=1,\ldots,n;\label{alfatozero} \\
\lim_{k \to \infty} \Delta_k = 0. & \label{Deltatozero}
\end{align}
\end{proposition}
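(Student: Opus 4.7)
The plan is to derive all three limits from the one-step descent inequality in Theorem~\ref{teo:Phi}, using $\Phi_k \ge f_{\min}$ from~\eqref{eq:lowerbound_Phi}. The natural order is to prove~\eqref{Deltatozero} first, then read off~\eqref{alfatildetozero} and~\eqref{alfatozero} from the algorithm's bookkeeping.

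First I would telescope the bound $\Phi_k - \Phi_{k-1} \le -c_1\Delta_k^2$ over $k=1,\dots,K$ to obtain
\[
c_1 \sum_{k=1}^{K} \Delta_k^2 \;\le\; \Phi_0 - \Phi_K \;\le\; \Phi_0 - f_{\min},
\]
where the last inequality uses~\eqref{eq:lowerbound_Phi}. Letting $K \to \infty$ and recalling that $c_1>0$ by~\eqref{ctilde}, the series $\sum_{k\ge 1}\Delta_k^2$ converges, so in particular $\Delta_k \to 0$. This establishes~\eqref{Deltatozero}.

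Next, \eqref{alfatildetozero} follows immediately from the definition $\Delta_k = \max_{i=1,\dots,n}\tilde\alpha_k^i$, which gives $0 \le \tilde\alpha_k^i \le \Delta_k \to 0$ for every $i$.

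For~\eqref{alfatozero}, I would split on whether iteration $k$ is successful. If $k$ is unsuccessful, then $\alpha_k^i=0$ for all $i$ by definition. If $k$ is successful, then by the update rule either $\alpha_k^i = 0$, or $\alpha_k^i > 0$ and in that case $\tilde\alpha_{k+1}^i = \alpha_k^i$, so
\[
\alpha_k^i \;=\; \tilde\alpha_{k+1}^i \;\le\; \Delta_{k+1}.
\]
Combining both cases, $0 \le \alpha_k^i \le \Delta_{k+1}$ for every $k$ and every $i$, and letting $k\to\infty$ and invoking~\eqref{Deltatozero} yields~\eqref{alfatozero}.

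There is no real obstacle here: the only subtlety is being careful that the bound $\alpha_k^i \le \Delta_{k+1}$ rather than $\alpha_k^i \le \Delta_k$, because during extrapolation the actual step can exceed the current tentative step; this is exactly why Theorem~\ref{teo:Phi} is phrased in terms of $\Delta_k$ (a nonincreasing-per-iteration-type quantity up to the factor $1/\theta$ from Lemma~\ref{lemmalegame}), so the summability argument goes through cleanly and the update rule transfers the decay from $\Delta_{k+1}$ back to $\alpha_k^i$.
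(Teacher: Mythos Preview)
Your proposal is correct and follows essentially the same approach as the paper's proof: derive $\Delta_k\to 0$ from Theorem~\ref{teo:Phi} together with the lower bound~\eqref{eq:lowerbound_Phi}, deduce~\eqref{alfatildetozero} from $\tilde\alpha_k^i\le\Delta_k$, and obtain~\eqref{alfatozero} from the update rule, which guarantees that either $\alpha_k^i=0$ or $\alpha_k^i=\tilde\alpha_{k+1}^i$. The paper's proof is simply a terser version of your argument; your explicit telescoping and the remark about $\alpha_k^i\le\Delta_{k+1}$ (rather than $\Delta_k$) add helpful detail but do not change the route.
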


\begin{proof}
From~\eqref{eq:lowerbound_Phi} and~\eqref{eq:bound_Phikkm1}, we get~\eqref{Deltatozero}.
Then, using the definition of $\Delta_k$ given in Algorithm~\ref{alg:dfl}, also~\eqref{alfatildetozero} holds.
Since, for all $k \ge 1$ and for all $i = 1,\ldots,n$, from the instructions of the algorithm
either $\alpha^i_{k-1} = 0$ or $\alpha^i_{k-1} = \tilde \alpha^i_k$, then \eqref{alfatozero} follows from \eqref{alfatildetozero}.
\end{proof}

Using Theorem~\ref{th:chi} and Proposition~\ref{prop:conv1}, it is now possible to prove the convergence of the algorithm to stationary points.

\begin{theorem}\label{th:stationary}
Let $\{x_k\}$ be the sequence of points produced by Algorithm~\ref{alg:dfl}. Then,
\begin{itemize}
\item $\displaystyle\lim_{k\to\infty}\chi(x_k) = 0$, i.e., every limit point of $\{x_k\}$ is stationary,
\item $\displaystyle{\lim_{k \to \infty} \|x_{k+1}-x_k\| = 0}$.
\end{itemize}
\end{theorem}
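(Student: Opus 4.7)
The plan is to deduce both claims directly from Theorem~\ref{th:chi} and Proposition~\ref{prop:conv1}, with essentially no extra ingredients needed.

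First I would establish $\chi(x_k)\to 0$. Theorem~\ref{th:chi} already supplies, for every iteration $k$ (successful or not), an inequality of the form $\chi(x_k)\le C\,\Delta_{k+1}$, where $C$ is the larger of the two explicit constants appearing on the right-hand side of that statement. Since Proposition~\ref{prop:conv1} gives $\lim_k\Delta_k=0$, I would then conclude $\lim_k\Delta_{k+1}=0$ and therefore $\chi(x_k)\to 0$. The stationarity of every limit point then follows by continuity of $\chi$ (noted just before the definition of a stationary point): if a subsequence $x_{k_j}\to x^*$, then $\chi(x^*)=\lim_j\chi(x_{k_j})=0$, so $x^*$ is stationary.

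Next I would verify $\|x_{k+1}-x_k\|\to 0$. Unrolling the inner loop of Algorithm~\ref{alg:dfl} gives $x_{k+1}-x_k=\sum_{i=1}^n \alpha_k^i d_k^i$ with $d_k^i\in\{\pm e_i\}$. Because these directions point along distinct coordinate axes and have unit norm, they are mutually orthogonal, so
\[
\|x_{k+1}-x_k\|^2=\sum_{i=1}^n(\alpha_k^i)^2.
\]
By \eqref{alfatozero}, $\alpha_k^i\to 0$ for each fixed $i\in\{1,\dots,n\}$; since the sum ranges over a fixed finite index set, the right-hand side tends to zero.

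There is essentially no obstacle in either part. Theorem~\ref{th:chi} was precisely tailored to give a uniform bound on $\chi(x_k)$ in both the successful and unsuccessful regimes (cf.\ Remark~\ref{rem:ls}), and Proposition~\ref{prop:conv1} already delivers the needed convergence of the stepsizes to zero. The only subtlety worth flagging is the implicit appeal to continuity of $\chi$ when passing from $\chi(x_k)\to 0$ to stationarity of an arbitrary limit point, but this is a standard property of the criticality measure recalled earlier in the text.
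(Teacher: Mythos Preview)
Your proposal is correct and mirrors the paper's own proof almost verbatim: the paper likewise invokes Theorem~\ref{th:chi} together with~\eqref{Deltatozero} to get $\chi(x_k)\to0$, and then writes $x_{k+1}-x_k=\sum_{i=1}^n\alpha_k^id_k^i$ and appeals to~\eqref{alfatozero}. Your added remarks about continuity of $\chi$ and orthogonality of the $d_k^i$ simply make explicit what the paper leaves implicit.
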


\begin{proof}
From Theorem~\ref{th:chi} and \eqref{Deltatozero} in Proposition~\ref{prop:conv1}, taking the limit for $k\to\infty$ it follows that $\chi(x_k) \to 0$, that is, every limit point of $\{x_k\}$ is stationary.
Finally, since $x_{k+1} - x_k = \sum_{i=1}^n \alpha_k^id_k^i$, using~\eqref{alfatozero} in Proposition~\ref{prop:conv1} we also have that $\|x_{k+1}-x_k\| \to 0$.
\end{proof}

\subsection{Worst-case complexity}\label{susec:worst_case}
This section is devoted to analyze the worst-case complexity of Algorithm~\ref{alg:dfl}. In particular,
\begin{itemize}
    \item[(i)] we give an upper bound of $\bigO{(n^2 \epsilon^{-2})}$ on the total number of iterations where $\chi (x_k)$ is not below a prespecified threshold $\epsilon$;
    \item[(ii)] we give an upper bound of $\bigO{(n \epsilon^{-2})}$ on the number of iterations required to generate the first point $x_k$ where $\chi (x_k)$ is below a prespecified tolerance $\epsilon$;
    \item[(iii)] we give an upper bound of  $\bigO{(n^2 \epsilon^{-2})}$ on the number of function evaluations required to generate the first point $x_k$ where $\chi (x_k)$ is below a prespecified tolerance $\epsilon$.
\end{itemize}

We start by providing an upper bound of  $\bigO{(n^2 \epsilon^{-2})}$ on the total number of iterations where $\chi (x_k) \geq \epsilon$, with a given $ \epsilon > 0$.

\begin{theorem}\label{th:complexity_iter}
Let $\{x_k\}$ be the sequence of points produced by the algorithm. Given any $\epsilon > 0$, let
\[
K_{\epsilon}=\{k \colon \chi(x_k) \ge \epsilon \}.
\]
Then, $|K_\epsilon| \leq \bigO{(n^2\epsilon^{-2})}$.
In particular,
\[
|K_\epsilon| \leq \left\lfloor\frac{c_2^2 (\Phi_0 - f_{\min})}{c_1}\epsilon^{-2}\right\rfloor,
 \]
where $c_1$ is defined as in Theorem~\ref{teo:Phi} and
\[
 c_2 = \sqrt{n} \max\left\{\frac{\gamma+L}{\delta} +L\sqrt{n} + \frac{M_g}{\theta},\dfrac{\gamma+\Lmax+M_g}{\theta}\right\}.
\]
\end{theorem}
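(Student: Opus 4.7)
The plan is to combine the two main pillars already established: Theorem~\ref{th:chi}, which upper bounds $\chi(x_k)$ by a constant times $\Delta_{k+1}$, and Theorem~\ref{teo:Phi}, which provides a per-iteration decrease of the merit function $\Phi_k$ proportional to $\Delta_k^2$. The lower bound $\Phi_k \ge f_{\min}$ from~\eqref{eq:lowerbound_Phi} then gives the standard telescoping argument.

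First, I would observe that the constant $c_2$ defined in the statement is exactly the maximum of the two coefficients appearing in the case split of Theorem~\ref{th:chi}, so that regardless of whether iteration $k$ is successful or unsuccessful we have the uniform bound
\[
\chi(x_k) \le c_2 \, \Delta_{k+1}.
\]
Consequently, for every $k \in K_\epsilon$ we get $\Delta_{k+1} \ge \epsilon / c_2$, and substituting this into the inequality
\[
\Phi_{k+1} - \Phi_k \le -c_1 \Delta_{k+1}^2
\]
(which is the statement of Theorem~\ref{teo:Phi} shifted by one index) yields $\Phi_{k+1} - \Phi_k \le -c_1 \epsilon^2 / c_2^2$ for every $k \in K_\epsilon$.

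Next, I would telescope. For any $N$, write
\[
\Phi_{N+1} - \Phi_0 \;=\; \sum_{k=0}^{N}(\Phi_{k+1}-\Phi_k),
\]
and note that every term of this sum is nonpositive since $c_1 > 0$ and $\Delta_{k+1}^2 \ge 0$. Discarding the terms with $k \notin K_\epsilon$ therefore only increases the right-hand side, giving
\[
\Phi_{N+1} - \Phi_0 \;\le\; -\,\lvert K_\epsilon \cap \{0,\dots,N\}\rvert \cdot \frac{c_1 \epsilon^2}{c_2^2}.
\]
Combining with $\Phi_{N+1} \ge f_{\min}$ and rearranging yields
\[
\lvert K_\epsilon \cap \{0,\dots,N\}\rvert \;\le\; \frac{c_2^2 (\Phi_0 - f_{\min})}{c_1 \, \epsilon^2}.
\]
Letting $N \to \infty$ and noting that $|K_\epsilon|$ is an integer produces the floor in the stated bound. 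The $\bigO{(n^2 \epsilon^{-2})}$ rate follows because $c_2 = \bigO{(n)}$ (from its explicit definition containing the $L\sqrt{n}$ term) and $c_1$ does not depend on $n$, so $c_2^2/c_1 = \bigO{(n^2)}$.

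There is no substantive obstacle here; the result is essentially a corollary of the two earlier theorems. The only subtlety worth highlighting is that the bound must apply at \emph{every} iteration, including successful ones, which is precisely the feature ensured by the extrapolation step of the line search (as noted in Remark~\ref{rem:ls}) and which distinguishes this complexity result from the one typically obtained for direct-search methods, where only unsuccessful iterations can be directly controlled by the stepsize.
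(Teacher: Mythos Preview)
Your proof is correct and follows essentially the same approach as the paper: combine the uniform bound $\chi(x_k)\le c_2\Delta_{k+1}$ from Theorem~\ref{th:chi} with the per-iteration decrease $\Phi_{k+1}-\Phi_k\le -c_1\Delta_{k+1}^2$ from Theorem~\ref{teo:Phi}, then telescope and use $\Phi_k\ge f_{\min}$. The paper presents the telescoping by summing first and passing to the limit via the boundedness of $\{\Phi_k\}$, whereas you keep a finite horizon $N$ and let $N\to\infty$ at the end, but this is only a cosmetic difference.
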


\begin{proof}
From Theorem \ref{teo:Phi}, it follows that the sequence $\{\Phi_k\}$ is monotonically non-increasing. Furthermore, for all $k \ge 1$, we can write
\begin{equation}\label{complessitaset1}	
\Phi_{k} - \Phi_{0} \leq  - c_1 \sum_{j=1}^{k}\Delta_j^2 = - c_1 \sum_{j=0}^{k-1}\Delta_{j+1}^2.
\end{equation}
Since the sequence $\{\Phi_k\}$ is bounded from below, there exists $\Phi^*$ such that
\[
\lim_{k\to\infty}\Phi_k=\Phi^*\ge f_{\min},
\]
with $f_{\min}$ defined as in~\eqref{eq:lowerbound_Phi}.
Taking the limit for $k\to\infty$ in~\eqref{complessitaset1} we obtain
	\[
	\Phi_0 - f_{\min} \ge \Phi_0 -\Phi^*\geq  c_1 \sum_{k=0}^{\infty}\Delta_{k+1}^2\geq c_1 \sum_{k\in K_\epsilon}\Delta_{k+1}^2.
	\]
Therefore, using the definition of $K_\epsilon$ and Theorem~\ref{th:chi},
we get
\[
\Phi_0 - f_{\min} \geq c_1 \sum_{{k\in K_\epsilon}}\Delta_{k+1}^2 \geq |K_\epsilon| c_1\frac{\epsilon^2}{c_2^2}.
\]
Thus, the desired result is obtained.
\end{proof}

As appears from the proof of Theorem~\ref{th:complexity_iter}, the above result relies on Theorem~\ref{th:chi} which, in turn, uses Theorem~\ref{grad_app}.
The latter, as pointed out in Remark~\ref{rem:ls},
strongly relies on the extrapolation phase of the line search procedure (i.e., lines 12--15 of Algorithm~\ref{alg:ls}),
which allows us to bound $\chi(x_k)$ at both successful and unsuccessful iterations.

In the following theorem, we give an upper bound of $\bigO{(n\epsilon^{-2})}$ on the maximum number of iterations required to
produce a point $x_k$ such that $\chi(x_k)$ is below a given threshold $\epsilon > 0$.
This bound aligns with established findings for direct-search~\cite{gratton2019direct} and  model-based~\cite{hough2022model} methods.

\begin{theorem}\label{th:compl}
Let $\{x_k\}$ be the sequence of points produced by Algorithm~\ref{alg:dfl}.
Given any $\epsilon > 0$, let $j_{\epsilon} \ge 1$ be the first iteration such that $\chi(x_{j_{\epsilon}})<\epsilon$, that is, $\chi(x_k) \geq \epsilon$ for all $k \in \{0,\dots,j_{\epsilon}-1\}$.

Then, $j_{\epsilon} \leq \bigO{(n\epsilon^{-2})}$. In particular,
\[
j_{\epsilon} \leq \biggl\lfloor\frac{n c_3^2 (\Phi_0 - f_{\min})}{c_1}\epsilon^{-2}\biggr\rfloor,
\]
where $c_1$ is given by~\eqref{ctilde} and
\begin{equation}\label{c1def_LAM}
c_3 = \frac{\gamma+\Lmax + M_g}{\theta}.
\end{equation}	
\end{theorem}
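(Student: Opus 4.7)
The plan is to combine the potential-function decrease of Theorem~\ref{teo:Phi} with the criticality-to-stepsize bound of Theorem~\ref{th:chi}. The improved $\mathcal O(n\epsilon^{-2})$ rate (as opposed to the $\mathcal O(n^2\epsilon^{-2})$ of Theorem~\ref{th:complexity_iter}) is driven by the fact that in the unsuccessful branch of Theorem~\ref{th:chi} the constant $c_3$ does not carry the $L\sqrt{n}$ term that makes the successful-branch constant blow up by an extra $\sqrt{n}$; so if we can arrange that every $\Delta_k$ with $k\in\{1,\dots,j_\epsilon\}$ is bounded below by $\epsilon/(\sqrt{n}\,c_3)$, the target inequality drops out of a single telescoped sum.

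I would first telescope $\Phi_k-\Phi_{k-1}\le -c_1\Delta_k^2$ from $k=1$ to $k=j_\epsilon$, and use $\Phi_{j_\epsilon}\ge f_{\min}$ to write
\[
\Phi_0-f_{\min}\,\ge\,c_1\sum_{k=1}^{j_\epsilon}\Delta_k^2.
\]
Then I would prove by induction on $k$ that $\Delta_k\ge \epsilon/(\sqrt{n}\,c_3)$ for every $k\in\{1,\dots,j_\epsilon\}$. The inductive step splits according to the type of iteration $k-1$: if $k-1$ is unsuccessful, Theorem~\ref{th:chi} directly yields $\chi(x_{k-1})\le \sqrt{n}\,c_3\,\Delta_k$, and since $k-1<j_\epsilon$ we have $\chi(x_{k-1})\ge\epsilon$, so $\Delta_k\ge \epsilon/(\sqrt{n}\,c_3)$; if $k-1$ is successful, Lemma~\ref{lemmalegame} gives $\Delta_k\ge\Delta_{k-1}$, and the bound is inherited from the previous iteration. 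Substituting this uniform lower bound into the telescoped inequality produces
\[
\Phi_0-f_{\min}\,\ge\,c_1\,j_\epsilon\,\frac{\epsilon^2}{n\,c_3^2},
\]
which rearranges into the claimed estimate $j_\epsilon\le \lfloor n\,c_3^2(\Phi_0-f_{\min})/(c_1\,\epsilon^2)\rfloor$.

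The step I expect to be the main obstacle is the base of the induction, i.e.\ when the algorithm begins with a run of successful iterations so that no earlier unsuccessful step has yet anchored $\Delta_k$ through the unsuccessful branch of Theorem~\ref{th:chi}. Closing this gap requires extracting additional information from the extrapolation phase of Algorithm~\ref{alg:ls}: along the descent generator of $T_k$ provided by Proposition~\ref{prop:gd}, while $\chi(x_k)$ stays above $\epsilon$ and $\Delta_k$ is still small, the while loop at lines~\ref{LS_step12}--\ref{LS_step14} cannot terminate before the accepted stepsize is large enough, essentially by the same mean-value reasoning used in the unsuccessful case of Theorem~\ref{bound_grad}. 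Once this base case is settled, the remainder is a direct substitution into the telescoped bound, and the clean $\mathcal O(n\epsilon^{-2})$ rate follows, matching the complexity known for direct-search~\cite{gratton2019direct} and model-based~\cite{hough2022model} methods.
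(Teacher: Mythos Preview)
Your approach is essentially the paper's: telescope Theorem~\ref{teo:Phi} and lower-bound each $\Delta_k$ by tracing back to the most recent unsuccessful iteration, where the unsuccessful branch of Theorem~\ref{th:chi} supplies $\Delta\ge\epsilon/(\sqrt{n}\,c_3)$. Your induction and the paper's device of defining, for each $k$, the index $m(k)$ of the last unsuccessful iteration before $k$ are equivalent bookkeeping, and the substitution into the telescoped sum is identical.

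You are right to flag the base case --- an initial run of successful iterations with no prior unsuccessful step to anchor $\Delta$ --- as the delicate point. The paper's proof in fact glosses over exactly this: when no unsuccessful iteration precedes $k$ it sets $m(k)=-1$ and then invokes $\Delta_{m(k)+1}\ge\chi(x_{m(k)})/(\sqrt{n}\,c_3)$, i.e.\ it appeals to $\chi(x_{-1})$, which is undefined. So the gap you identify is genuine and is not resolved in the paper either; your write-up is in this respect more careful than the original.

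Your proposed remedy via the extrapolation phase is plausible in spirit but not complete as stated: the while loop at lines~\ref{LS_step12}--\ref{LS_step14} can also exit because $\alpha=\alphamax$ (the boundary is hit), and in that situation the mean-value reasoning of the unsuccessful branch of Theorem~\ref{bound_grad} does not directly yield the constant $c_3$ free of the $L\sqrt{n}$ term. A cleaner patch, which preserves the $\bigO(n\epsilon^{-2})$ rate at the cost of perturbing the explicit constant, is to handle the initial successful run separately: on that run $\Delta_k\ge\Delta_0$ by Lemma~\ref{lemmalegame}, so its length is at most $(\Phi_0-f_{\min})/(c_1\Delta_0^2)$, an $\epsilon$-independent additive term, and your induction applies verbatim from the first unsuccessful iteration onward.
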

\begin{proof}
Let $\Phi_k$ the function defined in~\eqref{Phi_def}. We can write
\[
\Phi_{j_{\epsilon}} - \Phi_{0} = \sum_{k=0}^{j_{\epsilon}-1} (\Phi_{k+1}-\Phi_k)
\]
and, using Theorem~\ref{teo:Phi}, we have that
\[
\Phi_{j_{\epsilon}} - \Phi_{0} \leq  - c_1 \sum_{k=0}^{j_{\epsilon}-1}\Delta_{k+1}^2.
\]
Recalling~\eqref{eq:lowerbound_Phi}
and the fact that $\Phi_k \ge f(x_k)$ for $k \ge 0$, we get
\begin{equation}\label{eq:bound_diff_Phi}
f_{\min} -\Phi_0 \leq \Phi_{j_{\epsilon}} - \Phi_{0}\leq - c_1 \sum_{k=0}^{j_{\epsilon}-1}\Delta_{k+1}^2.
\end{equation}
Now, we can partition the set of iteration indices $\{0,\dots,j_{\epsilon}-1\}$ into $\mathcal S_{j_{\epsilon}}$ and $\mathcal U_{j_{\epsilon}}$ such that
\[
k \in \mathcal S_{j_{\epsilon}} \, \Leftrightarrow \,  x_k \not= x_{k-1}, \quad k\in \mathcal U_{j_{\epsilon}} \, \Leftrightarrow \,  x_k = x_{k-1}, \quad \mathcal S_{j_{\epsilon}} \cup \mathcal U_{j_{\epsilon}} = \{0,\dots,j_{\epsilon}-1\},
\]
that is, $\mathcal S_{j_{\epsilon}}$ and $\mathcal U_{j_{\epsilon}}$ contain the successful and unsuccessful iterations up to $j_{\epsilon}-1$, respectively.
So, from~\eqref{eq:bound_diff_Phi}, we can write
\begin{equation}\label{eq:bound_diff_Phi2}
\Phi_0-f_{\min} \geq c_1 \sum_{k\in \mathcal S_{j_{\epsilon}}}\Delta_{k+1}^2 + c_1 \sum_{k\in \mathcal U_{j_{\epsilon}}}\Delta_{k+1}^2.
\end{equation}
For all $k\in \mathcal S_{j_{\epsilon}}$, let us define the index $m(k)$ as follows:
\begin{itemize}
\item if $\mathcal U_{j_{\epsilon}}\cap \{0,\ldots,k-1\}\not= \emptyset$, then $m(k)$ in the largest index of $\mathcal U_{j_{\epsilon}}\cap \{0,\ldots,k-1\}$;
\item otherwise, $m(k) = -1$.
\end{itemize}
Note that, by definition, $m(k)$ is the last unsuccessful iteration before iteration $k$, i.e., all the iterations from $m(k)+1$ to $k$ are successful iterations.
Lemma~\ref{lemmalegame} guarantees that $\Delta_{k+1} \ge \Delta_{m(k)+1}$ for all $k\in \mathcal S_{j_{\epsilon}}$.
Using~\eqref{eq:bound_diff_Phi2}, we obtain
\[
\Phi_0-f_{\min} \geq c_1 \sum_{k\in \mathcal S_{j_{\epsilon}}}\Delta_{m(k)+1}^2 + c_1 \sum_{k\in \mathcal U_{j_{\epsilon}}}\Delta_{k+1}^2.
\]
From Theorem~\ref{th:chi}, we have that
$\Delta_{m(k)+1} \ge \chi (x_{m(k)})/ (\sqrt n c_3)$ for all $k\in \mathcal S_{j_{\epsilon}}$
and
$\Delta_{k+1} \ge \chi (x_k)/ (\sqrt n c_3)$ for all $k\in \mathcal U_{j_{\epsilon}}$.
Since $\chi (x_k) \geq \epsilon$ for all $k \in \{0,\dots,j_{\epsilon}-1\}$,
with $\mathcal S_{j_{\epsilon}} \cup \mathcal U_{j_{\epsilon}} = \{0,\dots,j_{\epsilon}-1\}$, we get
\[
\Phi_0 - f_{\min} \geq j_{\epsilon} \frac{c_1}{nc_3^2} \epsilon^2.
\]
Thus, the desired result is obtained.
\end{proof}

The last complexity result we give is about the maximum number of function evaluations required to
produce a point $x_k$ such that $\chi(x_k)$ is less than or equal to a given threshold $\epsilon > 0$.
Using arguments from the related literature~\cite{brilli2024worst,vicente2013worst}, we obtain an upper bound of $\bigO{(n^2 \epsilon^{-2})}$, which still aligns with established findings for direct-search~\cite{gratton2019direct} and  model-based~\cite{hough2022model} methods

\begin{theorem}
Let $\{x_k\}$ be the sequence of points produced by Algorithm~\ref{alg:dfl}.
Given any $\epsilon > 0$, let $j_{\epsilon} \ge 1$ be the first iteration such that $\chi(x_{j_{\epsilon}}) < \epsilon$, that is, $\chi(x_k) \geq \epsilon$ for all $k \in \{0,\dots,j_{\epsilon}-1\}$.

Denoting by $Nf_{j_{\epsilon}}$ the number of function evaluations required by Algorithm~\ref{alg:dfl} up to iteration $j_{\epsilon}$, then
$Nf_{j_{\epsilon}} \leq \bigO{(n^2\epsilon^{-2})}$. In particular,
\[
Nf_{j_{\epsilon}} \leq 2n \left\lfloor\frac{ nc_3^2 (\Phi_0 - f_{\min})}{c_1}\epsilon^{-2}\right\rfloor + \left\lfloor\frac{nc_3^2(f_0 - f_{\min})}{\gamma c^2}\max\Bigg\{1,\Biggl(\frac{\delta}{1-\delta}\Biggr)^2\Bigg\}\epsilon^{-2}\right\rfloor,
\]
where $c_1$ and $c_3$ are given in~\eqref{ctilde} and \eqref{c1def_LAM}, respectively.
\end{theorem}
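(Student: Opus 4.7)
My plan is to partition, at each coordinate-iteration $(k, i)$, the function evaluations performed by Algorithm~\ref{alg:ls} into productive and non-productive ones. I would call productive the successful initial sufficient-decrease check at lines 4 or 7 (when one occurs), together with every while-loop iteration in which $\omega = \alpha/\delta < \alphamax$ and the sufficient-decrease test passes (``type (a)'' expansions). I would call non-productive every failed sufficient-decrease check (initial or inside the while) plus the at-most-one successful while-iteration in which $\omega = \alphamax$ (``type (b)''). A case analysis on the exit conditions of the line search then shows that there are at most two non-productive evaluations per $(k, i)$: at most one failed initial check (since whenever the while loop runs, one of the two initial checks was successful), plus at most one further non-productive step, which is either the failed while-check that causes the loop to exit or the single type (b) step when the loop exits via $\alpha = \alphamax$. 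Summing over $i = 1,\ldots,n$ and $k < j_\epsilon$ and invoking Theorem~\ref{th:compl} to bound $j_\epsilon$ produces exactly the first term $2n\lfloor nc_3^2(\Phi_0 - f_{\min})/(c_1 \epsilon^2)\rfloor$ of the claimed bound.

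Next I would bound the total number of productive evaluations via the total decrease in $f$. The successful initial check yields a decrease of at least $\gamma(\nu_k^i)^2$, and a type (a) expansion at step $j$ of the while loop yields a decrease of $\gamma g_j^2$ with $g_j = \nu_k^i(1-\delta)/\delta^j \geq \nu_k^i(1-\delta)/\delta$, hence at least $\gamma(\nu_k^i)^2 ((1-\delta)/\delta)^2$. So every productive evaluation at $(k, i)$ yields a decrease of at least $\gamma(\nu_k^i)^2 \min\{1, ((1-\delta)/\delta)^2\}$. Telescoping across all productive evaluations and using $f \geq f_{\min}$ over $[l, u]$ gives
\[
\sum_{k < j_\epsilon,\, i=1,\ldots,n} (\text{productive}_{k,i}) \cdot \gamma(\nu_k^i)^2 \min\{1, ((1-\delta)/\delta)^2\} \leq f_0 - f_{\min}.
\]

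I would then substitute $\nu_k^i \geq c\Delta_k$ (from line 6 of Algorithm~\ref{alg:dfl}) and the lower bound $\Delta_k \geq \epsilon/(\sqrt n c_3)$, valid for all $k < j_\epsilon$ by the same argument as in the proof of Theorem~\ref{th:compl}: Theorem~\ref{th:chi} gives $\Delta_{k+1} \geq \chi(x_k)/(\sqrt n c_3) \geq \epsilon/(\sqrt n c_3)$ at every unsuccessful iteration, and Lemma~\ref{lemmalegame} propagates this bound through any subsequent successful iterations. Using the identity $1/\min\{1, ((1-\delta)/\delta)^2\} = \max\{1, (\delta/(1-\delta))^2\}$ and taking floors (since $Nf_{j_\epsilon} \in \N$), this yields
\[
\sum_{k < j_\epsilon,\, i=1,\ldots,n} (\text{productive}_{k,i}) \leq \left\lfloor \frac{nc_3^2(f_0 - f_{\min})}{\gamma c^2}\max\{1, (\delta/(1-\delta))^2\}\epsilon^{-2}\right\rfloor,
\]
which is the second term of the claimed bound. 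Adding the productive and non-productive contributions completes the proof.

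The main obstacle will be setting up the partition so that non-productive evaluations are uniformly bounded by $2$ per $(k, i)$ while every productive evaluation carries the decrease coefficient $\min\{1, ((1-\delta)/\delta)^2\}$. The delicate point is handling the at-most-one type (b) expansion, whose decrease $\gamma(\alphamax - \alpha)^2$ may be arbitrarily small: the trick is to treat it as non-productive, which is consistent because in any single line search either a type (b) step occurs or the while loop exits via a failed condition check, but not both. All remaining steps reduce to substitutions using Theorem~\ref{th:chi}, Theorem~\ref{th:compl}, and Lemma~\ref{lemmalegame}.
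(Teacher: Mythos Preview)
Your proposal is correct and follows essentially the same two-part strategy as the paper: bound the evaluations without guaranteed uniform decrease by $2n$ times the iteration count from Theorem~\ref{th:compl}, and bound the remaining evaluations via the telescoping objective decrease together with $\nu_k^i \ge c\Delta_k$ and the $m(k)$-argument giving $\Delta_k \ge \epsilon/(\sqrt{n}\,c_3)$. Your explicit handling of the boundary expansion step ($\omega=\alphamax$, whose decrease $\gamma(\alphamax-\alpha)^2$ carries no uniform lower bound) by classifying it as non-productive is in fact cleaner than the paper, which tacitly folds such steps into the ``succeed to decrease'' count without separately justifying them.
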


\begin{proof}
First, let us partition the set of iteration indices $\{0,\dots,j_{\epsilon}-1\}$ into $\mathcal S_{j_{\epsilon}}$ and $\mathcal U_{j_{\epsilon}}$ such that
\[
k \in \mathcal S_{j_{\epsilon}} \, \Leftrightarrow \,  x_k \not= x_{k-1}, \quad k\in \mathcal U_{j_{\epsilon}} \, \Leftrightarrow \,  x_k = x_{k-1}, \quad \mathcal S_{j_{\epsilon}} \cup \mathcal U_{j_{\epsilon}} = \{0,\dots,j_{\epsilon}-1\},
\]
that is, $\mathcal S_{j_{\epsilon}}$ and $\mathcal U_{j_{\epsilon}}$ contain the successful and unsuccessful iterations up to $j_{\epsilon}-1$, respectively.

When the algorithm evaluates a new point, the latter can either succeed to decrease the objective function or fail to do so. Let us then define $Nf_{j_{\epsilon}}^{\mathcal S}$ as the total number of function evaluations related to points which succeed to decrease the objective function up to iteration $j_{\epsilon}$. Note that, at each iteration, the maximum number of function evaluations related to points which fail to decrease the objective function is $2n$ (and it can be equal to $2n$ only when $T_k = \R^n$).
So, we can write
\begin{equation}\label{NF}
Nf_{j_{\epsilon}} \leq 2n j_{\epsilon} + Nf_{j_{\epsilon}}^{\mathcal S} \le 2n \biggl\lfloor\frac{n c_3^2 (\Phi_0 - f_{\min})}{c_1}\epsilon^{-2}\biggr\rfloor + Nf_{j_{\epsilon}}^{\mathcal S},
\end{equation}
where the last inequality follows from Theorem~\ref{th:compl}.
Now, let us consider any iteration $k < j_{\epsilon}$ and any index $i \in \{1,\ldots,n\}$
such that the line search succeeds to produce a decrease in the objective function.
For each $\alpha$ used in the extrapolation phase of the line search, we have that either
\begin{equation}\label{eq:bound_diff_f_first}
f(y_k^i) - f(y_k^i + \alpha d_k^i) \geq \gamma \alpha^2\geq \gamma c^2\Delta_k^2,
\end{equation}
or
\begin{equation}\label{eq:bound_diff_f}
f(y_k^i + \alpha d_k^i) - f(y_k^i + (\alpha/\delta) d_k^i) \geq \gamma \Biggl(\frac{1-\delta}{\delta}\Biggr)^2\alpha^2\geq \gamma \Biggl(\frac{1-\delta}{\delta}\Biggr)^2 c^2\Delta_k^2.
\end{equation}
Let us define the index $m(k)$ as follows:
\begin{itemize}
\item if $\mathcal U_{j_{\epsilon}}\cap \{0,\ldots,k-1\}\not= \emptyset$, then $m(k)$ is the largest index of $\mathcal U_{j_{\epsilon}}\cap \{0,\ldots,k-1\}$;
\item otherwise, $m(k) = 0$.
\end{itemize}
Note that, by definition, $m(k)$ is the last unsuccessful iteration before iteration $k$, i.e., all the iterations from $m(k)+1$ to $k$ are successful iterations.
Lemma~\ref{lemmalegame} guarantees that $\Delta_k \ge \Delta_{m(k)+1}$ for all $k\in \mathcal S_{j_{\epsilon}}$.
Hence, from~\eqref{eq:bound_diff_f_first} and~\eqref{eq:bound_diff_f}, it follows that
\begin{gather*}
f(y_k^i) - f(y_k^i + \alpha d_k^i) \geq \gamma \alpha^2\geq \gamma c^2\Delta_{m(k)+1}^2 \geq \gamma \min\Bigg\{1,\Biggl(\frac{1-\delta}{\delta}\Biggr)^2\Bigg\} c^2\Delta_{m(k)+1}^2 \\
f(y_k^i + \alpha d_k^i) - f(y_k^i + (\alpha/\delta) d_k^i) \geq \gamma \Biggl(\frac{1-\delta}{\delta}\Biggr)^2\alpha^2\geq \gamma  {\min\Bigg\{1,\Biggl(\frac{1-\delta}{\delta}\Biggr)^2\Bigg\}} c^2\Delta_{m(k)+1}^2.
\end{gather*}
From Theorem~\ref{th:chi}, we have that $\Delta_{m(k)+1} \ge \chi (x_{m(k)})/ (\sqrt n c_3)$.
Since $\chi (x_k) \geq \epsilon$ for all $k \in \{0,\dots,j_{\epsilon}-1\}$,
we can write
\[
f(y_k^i + \alpha d_k^i) - f(y_k^i + (\alpha/\delta) d_k^i) \geq
\gamma \min\Bigg\{1,\Biggl(\frac{1-\delta}{\delta}\Biggr)^2\Bigg\} c^2\frac{\epsilon^2}{nc_3^2}.
\]
Then, recalling Assumption~\ref{assump1} and summing up the above relation over all function evaluations producing an objective decrease, we obtain
\[
f_0 - f_{\min} \geq Nf_{j_{\epsilon}}^{\mathcal S}\gamma {\min\Bigg\{1,\Biggl(\frac{1-\delta}{\delta}\Biggr)^2\Bigg\}} c^2\frac{\epsilon^2}{nc_3^2},
\]
that is,
\[
Nf_{j_{\epsilon}}^{\mathcal S} \leq \left\lfloor\frac{nc_3^2(f_0 - f_{\min})}{\gamma c^2}{\max\Bigg\{1,\Biggl(\frac{\delta}{1-\delta}\Biggr)^2\Bigg\}}\epsilon^{-2}\right\rfloor.
\]
The desired results hence follows from~\eqref{NF}.
\end{proof}

\section{Finite active-set identification}\label{sec4}
In this section, we show that Algorithm~\ref{alg:dfl} identifies the components of the final solution lying on the lower or the upper bounds (the so called \textit{active set}) in a finite number of iterations.

First, let us give an equivalent definition of stationarity for problem~\eqref{prob}, which will be useful in our analysis.

\begin{definition}\label{def:stat_grad}
A point $x^* \in [l,u]$ is said to be  a stationary point of problem~\eqref{prob} (i.e., $\chi(x^*)=0$) if, for all $i \in \{1,\ldots,n\}$, we have that
\[
\nabla_i f(x^*)
\begin{cases}
\geq 0 \quad \text{if } x^*_i = l_i, \\
=0 \quad \text{if } l_i < x^*_i < u_i, \\
\leq 0 \quad \text{if } x^*_i = u_i.
\end{cases}
\]
\end{definition}

Now, let us recall the definition of \textit{strict complementarity} and \textit{non-degenerate solutions}.

\begin{definition}\label{def:sc}
Given a stationary point $x^*$ of problem~\eqref{prob}, we say that a component $x^*_i$ satisfies the strict complementarity condition if $x^*_i \in \{l_i,u_i\}$ and $\nabla_i f(x^*) \ne 0$.
If the strict complementarity condition is satisfied by all components $x^*_i$, we say that $x^*$ is non-degenerate.
\end{definition}

In particular, we define $\Astar(x^*)$ as the \textit{active set} for a stationary point $x^*$, that is,
the index set for the active components of $x^*$.
We also define $\Astar^+(x^*)$ as the index set for those components satisfying the strict complementarity condition. Namely,
\[
\Astar(x^*) = \{i \colon x^*_i = l_i\} \cup  \{i \colon x^*_i = u_i\} \qquad \text{and} \qquad
\Astar^+(x^*) = \Astar(x^*) \cap \{i \colon \nabla_i f(x^*) \ne 0\}.
\]

Furthermore, for any stationary point $x^*$ such that $\Astar^+(x^*) \ne \emptyset$, let us define
\begin{equation}\label{sc_measure}
\zeta(x^*) = \min_{i \in \Astar^+(x^*)} \abs{\nabla_i f(x^*)}.
\end{equation}
We see that $\zeta(x^*)$ is a measure of the minimum amount of strict complementarity among the variables in $\Astar^+(x^*)$.
This quantity will be used to define a neighborhood of $x^*$ where the active components are correctly identified, following a similar approach as in~\cite{cristofari2022active,nutini2019active}.

Before diving into the main theorem of this section, we need some preliminary results following from the Lipschitz continuity of $\nabla f$. Recalling Definition~\ref{coord_lip}, using standard arguments (see, e.g.,~\cite{nesterov2013introductory}) one can prove that, for all $x \in [l,u]$, we have
\[
\abs{f(x+se_i) - f(x) - s \nabla_i f(x)} \le \frac{L_i}2 s^2 \quad \forall s \in \R\colon x+se_i \in [l,u], \quad i=1,\ldots,n.
\]
Hence, for all $x \in [l,u]$, we have
\begin{align}
f(x+se_i) & \le f(x) + s \nabla_i f(x) + \frac{L_i}2 s^2 \quad \forall s \in \R\colon x+se_i \in [l,u], \quad i=1,\ldots,n, \label{lips_descent} \\
f(x+se_i) & \ge f(x) + s \nabla_i f(x) - \frac{L_i}2 s^2 \quad \forall s \in \R\colon x+se_i \in [l,u], \quad i=1,\ldots,n. \label{lips_ascent}
\end{align}

The two following results provide bounds for the objective function when exploring any coordinate direction.

\begin{proposition}\label{prop:alpha_lips_descent}
Given $x \in [l,u]$, $\gamma \ge 0$ and $i \in \{1,\ldots,n\}$,
then
\[
f(x - s \sign(\nabla_i f(x)) e_i) \le f(x) - \gamma s^2
\]
for all $0 \le s \le 2\dfrac{\abs{\nabla_i f(x)}}{L_i+2\gamma}$ such that $x - s \sign(\nabla_i f(x)) e_i \in [l,u]$.
\end{proposition}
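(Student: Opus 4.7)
The plan is to derive the inequality directly from the coordinate-wise descent lemma~\eqref{lips_descent}. Write $s' = -s\,\sign(\nabla_i f(x))$ so that $(s')^2 = s^2$ and $s'\nabla_i f(x) = -s\,\abs{\nabla_i f(x)}$. Since by assumption $x + s' e_i = x - s\,\sign(\nabla_i f(x)) e_i \in [l,u]$, we can apply~\eqref{lips_descent} at $x$ with this $s'$ to get
\[
f\bigl(x - s\,\sign(\nabla_i f(x))\,e_i\bigr) \le f(x) - s\,\abs{\nabla_i f(x)} + \frac{L_i}{2} s^2.
\]

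The claim $f(x - s\,\sign(\nabla_i f(x)) e_i) \le f(x) - \gamma s^2$ then reduces to showing
\[
s\,\abs{\nabla_i f(x)} \ge \left(\frac{L_i}{2} + \gamma\right) s^2,
\]
which is trivial for $s = 0$ and, for $s > 0$, is equivalent to $s \le 2\abs{\nabla_i f(x)}/(L_i + 2\gamma)$, i.e., exactly the stepsize bound in the statement.

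There is no real obstacle here; the only things to be careful about are (i) the sign-handling when $\nabla_i f(x) = 0$ (in which case $\sign(\nabla_i f(x)) = 0$ and the inequality $s \le 0$ forces $s = 0$, so the conclusion is vacuous/trivial), and (ii) verifying that~\eqref{lips_descent} is applicable, which follows from the hypothesis that the perturbed point lies in $[l,u]$ together with Definition~\ref{coord_lip}. No iteration-level machinery of Algorithm~\ref{alg:dfl} is needed; the statement is purely a one-dimensional quadratic upper bound argument.
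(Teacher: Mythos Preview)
Your proof is correct and follows essentially the same approach as the paper: both arguments apply the coordinate-wise descent lemma~\eqref{lips_descent} and reduce the claim to the quadratic inequality $s\abs{\nabla_i f(x)} \ge (L_i/2+\gamma)s^2$. The only cosmetic difference is that the paper treats the cases $\nabla_i f(x)>0$ and $\nabla_i f(x)<0$ separately, whereas your substitution $s' = -s\,\sign(\nabla_i f(x))$ handles both at once.
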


\begin{proof}
From~\eqref{lips_descent}, we can write
\[
f(x + s e_i) \le f(x) + s \biggl(\nabla_i f(x) + \frac{L_i}2 s\biggr) \quad \forall s \in \R \colon x+se_i \in [l,u].
\]
The right-hand side of the above inequality is less than or equal to $f(x) - \gamma s^2$ if
\[
s\biggl(\nabla_i f(x) + \frac{L_i}2 s\biggr) \le -\gamma s^2.
\]
If $\nabla_i f(x) \ne 0$, solving with respect to $s$ we obtain
\begin{align*}
-\frac{2\nabla_i f(x)}{L_i+2\gamma} \le s \le 0 \quad & \text{if } \nabla_i f(x) > 0, \\
0 \le s \le -\frac{2\nabla_i f(x)}{L_i+2\gamma} \quad & \text{if } \nabla_i f(x) < 0,
\end{align*}
leading to the desired result.
\end{proof}

\begin{proposition}\label{prop:alpha_lips_ascent}
Given $x \in [l,u]$ and $i \in \{1,\ldots,n\}$,
then
\[
f(x + s \sign(\nabla_i f(x)) e_i) \ge f(x)
\]
for all $0 \le  s \le \dfrac{2\abs{\nabla_i f(x)}}{L_i}$ such that $x + s \sign(\nabla_i f(x)) e_i \in [l,u]$.
\end{proposition}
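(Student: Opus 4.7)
The plan is to mirror the proof of Proposition~\ref{prop:alpha_lips_descent}, but starting from the lower Lipschitz bound~\eqref{lips_ascent} rather than the upper one. Writing $\sigma = \sign(\nabla_i f(x))$ and substituting the step $t = s\sigma$ into~\eqref{lips_ascent} yields
\[
f(x + s\sigma e_i) \ge f(x) + s\sigma\nabla_i f(x) - \frac{L_i}{2}s^2,
\]
valid whenever $x + s\sigma e_i \in [l,u]$. The key identity is $\sigma\nabla_i f(x) = \abs{\nabla_i f(x)}$, which lets me rewrite the lower bound as
\[
f(x + s\sigma e_i) \ge f(x) + s\left(\abs{\nabla_i f(x)} - \frac{L_i}{2}s\right).
\]

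From here it is enough to show the right-hand parenthesis is non-negative for the stated range of $s$. For $s \ge 0$ the product $s\bigl(\abs{\nabla_i f(x)} - (L_i/2)s\bigr)$ is non-negative exactly when $s \le 2\abs{\nabla_i f(x)}/L_i$, which is precisely the hypothesis. This immediately gives $f(x + s\sigma e_i) \ge f(x)$, as required.

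The degenerate case $\nabla_i f(x) = 0$ needs a brief comment: then $\sigma = 0$ and the admissible range collapses to $s = 0$, so the claim reduces to $f(x) \ge f(x)$, which is trivially true. No other case distinction is needed.

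There is essentially no obstacle here; the argument is a two-line computation once~\eqref{lips_ascent} is invoked with the correctly signed step. The only mild subtlety is making sure the sign bookkeeping is explicit (namely, that $\sigma \nabla_i f(x)$ simplifies to $\abs{\nabla_i f(x)}$ regardless of whether $\nabla_i f(x)$ is positive or negative), so that the proof does not need to split into two symmetric sub-cases the way Proposition~\ref{prop:alpha_lips_descent} implicitly does.
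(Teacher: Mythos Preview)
Your proof is correct and follows essentially the same route as the paper: both start from the lower bound~\eqref{lips_ascent} and reduce the claim to the non-negativity of the quadratic $s\bigl(\abs{\nabla_i f(x)} - (L_i/2)s\bigr)$ on the stated interval. The only difference is cosmetic: the paper splits into the cases $\nabla_i f(x)>0$ and $\nabla_i f(x)<0$ and solves the resulting inequalities for the original variable, whereas you absorb the sign into $\sigma$ up front and handle both cases at once via $\sigma\nabla_i f(x)=\abs{\nabla_i f(x)}$.
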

\begin{proof}
From~\eqref{lips_ascent}, we can write
\[
f(x+se_i) \ge f(x) + s \biggl(\nabla_i f(x) - \frac{L_i}2 s\biggr) \quad \forall s \in \R\colon x+se_i \in [l,u].
\]
The right-hand side of the above inequality is greater than or equal to $f(x)$ if
\[
s\biggl(\nabla_i f(x) - \frac{L_i}2 s\biggr) \ge 0.
\]
If $\nabla_i f(x) \ne 0$, solving with respect to $s$ we obtain
\begin{align*}
0 \le s \le \frac{2\nabla_i f(x)}{L_i} \quad & \text{if } \nabla_i f(x) > 0, \\
\frac{2\nabla_i f(x)}{L_i} \le s \le 0 \quad & \text{if } \nabla_i f(x) < 0,
\end{align*}
leading to the desired result.
\end{proof}

The next proposition shows that, when $\nu^i_k$ is sufficiently small at a given iteration, Algorithm \ref{alg:dfl} cannot move along an ascent direction.

\begin{proposition}\label{prop:d2}
Consider an iteration $k$ of Algorithm~\ref{alg:dfl}.
If $\nu^i_k \le 2\abs{\nabla_i f(y_k^i)}/L_i$ for an index $i \in \{1,\ldots,n\}$, then
\[
\alpha_k^i > 0 \; \Rightarrow \;
d_k^i = -\sign(\nabla_i f(y_k^i)) e_i.
\]
\end{proposition}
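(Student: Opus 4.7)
The plan is to argue by contradiction: assume $\alpha_k^i>0$ and that the chosen direction $d_k^i$ is the \emph{ascent} direction $\sign(\nabla_i f(y_k^i))\,e_i$ (note that the hypothesis $\nu_k^i\le 2|\nabla_i f(y_k^i)|/L_i$ is vacuous when $\nabla_i f(y_k^i)=0$ since $\nu_k^i>0$, so we may assume $\nabla_i f(y_k^i)\ne 0$). Then derive a contradiction with the sufficient-decrease test that the line search must have passed at the initial trial stepsize $\bar\alpha=\nu_k^i$.

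More precisely, inspecting Algorithm~\ref{alg:ls} we see that whenever $\alpha_k^i>0$, the direction $d_k^i$ was accepted either at line 4 or at line 7, which in both cases means that the point $y_k^i+\nu_k^i d_k^i$ lies in $[l,u]$ and satisfies
\[
f(y_k^i+\nu_k^i d_k^i) \le f(y_k^i) - \gamma (\nu_k^i)^2.
\]
In particular, $f(y_k^i+\nu_k^i d_k^i) < f(y_k^i)$ since $\gamma>0$ and $\nu_k^i>0$.

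Now suppose $d_k^i=\sign(\nabla_i f(y_k^i))\,e_i$. Since $y_k^i+\nu_k^i d_k^i\in[l,u]$ and by hypothesis $\nu_k^i\le 2|\nabla_i f(y_k^i)|/L_i$, Proposition~\ref{prop:alpha_lips_ascent} (applied with $x=y_k^i$ and $s=\nu_k^i$) gives
\[
f(y_k^i+\nu_k^i d_k^i)=f\bigl(y_k^i+\nu_k^i\sign(\nabla_i f(y_k^i))\,e_i\bigr)\ge f(y_k^i),
\]
contradicting the strict decrease above. Hence, whenever $\alpha_k^i>0$, the only possibility left among the two candidates $\pm e_i$ tested by the line search is $d_k^i=-\sign(\nabla_i f(y_k^i))\,e_i$.

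This is essentially a one-step argument, so no real obstacle arises; the only subtlety worth double-checking is that the hypotheses of Proposition~\ref{prop:alpha_lips_ascent} are met, namely feasibility of $y_k^i+\nu_k^i d_k^i$, which is automatic because the acceptance conditions at lines 4 and 7 of Algorithm~\ref{alg:ls} explicitly require $\bar\alpha\le x_i-l_i$ or $\bar\alpha\le u_i-x_i$, respectively.
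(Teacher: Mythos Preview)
Your argument is correct and essentially the same as the paper's: both use Proposition~\ref{prop:alpha_lips_ascent} to show that the ascent direction $\sign(\nabla_i f(y_k^i))\,e_i$ cannot satisfy the sufficient-decrease test at the initial stepsize, forcing $d_k^i=-\sign(\nabla_i f(y_k^i))\,e_i$ whenever $\alpha_k^i>0$. Your version is slightly more explicit about the vacuous case $\nabla_i f(y_k^i)=0$ and about feasibility, but the core reasoning is identical.
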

\begin{proof}
Using Proposition~\ref{prop:alpha_lips_ascent}, for all $\alpha \le \nu^i_k$ and $\gamma > 0$ we have
\[
f(y_k^i + \alpha \sign(\nabla_i f(y_k^i)) e_i) \ge f(y_k^i) > f(y_k^i) - \gamma \alpha^2.
\]
Thus, the line search in Algorithm~\ref{alg:ls} fails when using the direction $\sign(\nabla_i f(y_k^i)) e_i$ with any stepsize $0 < \alpha \le \nu^i_k$.
So, if the line search returns $\alpha_k^i > 0$, necessarily $d_k^i = -\sign(\nabla_i f(y_k^i)) e_i$.
\end{proof}

Now, we are ready to state the main result of this section, establishing finite active-set identification of Algorithm~\ref{alg:dfl}.

\begin{theorem}\label{main_identif_theorem}
Let $\{x_k\}$ be the sequence of points produced by Algorithm~\ref{alg:dfl} and let $x^*$ be a limit point of $\{x_k\}$, i.e., there exists an infinite subsequence $\{x_k\}_K \to x^*$.
Then,
\begin{itemize}
\item[(i)] $\displaystyle{\lim_{k \to \infty, \, k \in K} x_{k+1} = x^*}$;
\item[(ii)] an iteration $\kA \in K$ exists such that, for all $k \ge \kA$, $k \in K$, we have that $(x_{k+1})_i = x^*_i$ for all $i \in \Astar^+(x^*)$.
\end{itemize}
\end{theorem}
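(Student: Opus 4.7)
Part (i) follows immediately from Theorem~\ref{th:stationary}: since $\|x_{k+1}-x_k\|\to 0$ and $x_k\to x^*$ along $K$, the triangle inequality yields $x_{k+1}\to x^*$ along $K$.

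For part (ii), my plan is to fix a coordinate $i\in\Astar^+(x^*)$ and, without loss of generality, assume $x^*_i=l_i$; then stationarity together with strict complementarity gives $g^*:=\nabla_if(x^*)>0$. The aim is to show that $(y_k^{i+1})_i=l_i$ for $k\in K$ sufficiently large, which yields $(x_{k+1})_i=l_i=x^*_i$. The argument will rest on three observations valid for $k\in K$ large: (a) $y_k^i\to x^*$ along $K$, because $\|y_k^i-x_k\|\le\sqrt{n}\Delta_k\to 0$ by Proposition~\ref{prop:conv1}, so by continuity $\nabla_if(y_k^i)\ge g^*/2$; (b) $\nu_k^i\to 0$ (Proposition~\ref{prop:conv1}), hence the hypotheses of Propositions~\ref{prop:d2}, \ref{prop:alpha_lips_descent}, and \ref{prop:alpha_lips_ascent} all hold with $s=\nu_k^i$ --- in particular, Proposition~\ref{prop:d2} forces $d_k^i=-e_i$ whenever $\alpha_k^i>0$; (c) the extrapolation phase in lines 12--15 of Algorithm~\ref{alg:ls}, which, once triggered, will be shown to drive the step all the way to the bound.

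Next I would distinguish three subcases according to $(y_k^i)_i$. If $(y_k^i)_i=l_i$, then $-e_i$ is infeasible and Proposition~\ref{prop:alpha_lips_ascent} rules out $+e_i$ as an ascent direction, so $\alpha_k^i=0$ and $(y_k^{i+1})_i=l_i$. If $(y_k^i)_i>l_i$ with $\nu_k^i\le\alphamax:=(y_k^i)_i-l_i$, Proposition~\ref{prop:alpha_lips_descent} yields sufficient decrease at $\bar\alpha=\nu_k^i$ along $-e_i$ and the extrapolation starts. The key step here is to show that the while loop exits only via $\alpha=\alphamax$: applying the Lipschitz bound \eqref{lips_descent} along $d=-e_i$, the test $f(y_k^i+\omega d)\le f(y_k^i+\alpha d)-\gamma(\omega-\alpha)^2$ is implied by $\omega-\alpha\le 2\nabla_if(y_k^i+\alpha d)/(L_i+2\gamma)$; since $y_k^i+\alpha d$ stays in a small neighborhood of $x^*$ (so $\nabla_if\ge g^*/2$) while $\omega-\alpha\le\alphamax\to 0$, the test is never violated and the loop can only exit through $\alpha=\alphamax$, giving $(y_k^{i+1})_i=l_i$.

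The main obstacle is the third subcase, $(y_k^i)_i>l_i$ with $\nu_k^i>\alphamax$: here $-e_i$ is rejected as infeasible and $+e_i$ fails sufficient decrease, so $\alpha_k^i=0$ and $(y_k^{i+1})_i=(y_k^i)_i>l_i$. To rule this out, I plan a persistence argument: as long as this subcase keeps recurring, coordinate $i$ stays frozen at $(x_k)_i>l_i$, so $\beta:=(x_k)_i-l_i$ remains at a fixed positive value, whereas Proposition~\ref{prop:conv1} forces $\nu_k^i\to 0$. After finitely many iterations, therefore, $\nu_k^i<\beta$, which switches the regime to the preceding ``good'' subcase and produces $(x_{k+1})_i=l_i$ at the next step. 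The delicate point will be to ensure that, throughout the intervening iterations, the sign of $\nabla_if(y_k^i)$ and the other small-$s$ hypotheses remain as required; this should follow from $\|x_{k+1}-x_k\|\to 0$ (Theorem~\ref{th:stationary}) and the local stability of $\sign(\nabla_if)$ around $x^*$, possibly supplemented by the quantitative bound $(x_k)_i-l_i=O(\Delta_{k+1})$ obtained by combining Theorem~\ref{th:chi} with the lower estimate $\chi(x_k)\ge((x_k)_i-l_i)\,\nabla_if(x_k)$. Finally, once $(x_{k+1})_i=l_i$ has been achieved, the first subcase ensures the value is preserved at all subsequent $k\in K$ large.
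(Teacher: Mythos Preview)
Your proof of (i) and your handling of subcases 1 and 2 in (ii) are correct and track the paper's argument closely. The paper organizes the extrapolation step a bit more tightly: rather than the bound $\nabla_i f\ge g^*/2$ together with $\omega-\alpha\le\alpha_{\max}\to 0$, it fixes a quantitative neighborhood (conditions~\eqref{neighborhood_A}) and proves the uniform inequality $|z_i-l_i|\le 2\nabla_i f(z)/(L_i+2\gamma)$ for every $z$ with $\|z-x^*\|\le\|y_k^i-x^*\|$. Since each intermediate point $y_k^i+\alpha d_k^i$ (with $d_k^i=-e_i$) is no farther from $x^*$ than $y_k^i$ itself, this directly yields $\omega-\alpha\le(y_k^i+\alpha d_k^i)_i-l_i\le 2\nabla_i f(y_k^i+\alpha d_k^i)/(L_i+2\gamma)$ at every extrapolation step, without a separate limiting argument. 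This is only packaging, though; your subcase-2 reasoning reaches the same conclusion.

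The genuine gap is subcase 3, and your persistence argument does not close it. The run $k_0,k_0+1,\dots$ you propose need not lie in $K$, and to keep coordinate $i$ frozen you must at \emph{each} such step rule out the $+e_i$ branch of the line search, which requires $\nabla_i f(y_{k_0+m}^i)>0$ and hence $y_{k_0+m}^i$ near $x^*$. But $\|x_{k+1}-x_k\|\to 0$ controls only one-step drift, not the cumulative sum $\sum_m\|x_{k_0+m+1}-x_{k_0+m}\|$, so the iterates may leave any prescribed neighborhood of $x^*$ before $\nu_k^i$ drops below $\beta$; the same objection undermines the ``preservation by subcase 1'' claim once $l_i$ is first reached at an iteration possibly far from $x^*$. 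Even if the persistence succeeded, it would only give $(x_{k_0+m+1})_i=l_i$ for some $m$ with $k_0+m$ not necessarily in $K$, whereas the theorem asks for $(x_{k+1})_i=l_i$ at each large $k\in K$. Finally, the quantitative bound you suggest, $(x_k)_i-l_i=O(\Delta_{k+1})$, goes the wrong way: combined with $\nu_k^i\ge c\Delta_k$ and $\Delta_k\le\Delta_{k+1}/\theta$ it gives both quantities as $O(\Delta_{k+1})$ with no inequality between them. It is worth noting that the paper's own proof does not treat this case separately: it asserts that when $\alpha_{\max}>0$ ``the condition at line~4 is satisfied'', which presupposes the feasibility clause $\bar\alpha=\nu_k^i\le(y_k^i)_i-l_i$ --- precisely what rules out your subcase 3 --- but the stated neighborhood conditions~\eqref{neighborhood_A} do not obviously force this. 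So you have put your finger on a real subtlety; it just is not resolved by the argument you sketched.
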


\begin{proof}
Since $y_k^1 = x_k$ and $y_k^{i+1} = x_k + \sum_{j=1}^i \alpha_k^j d_k^j$,
from Proposition~\ref{prop:conv1} and the fact that $\norm{d_k^i} = 1$, $i = 1,\ldots,n+1$,
we have
\begin{equation}\label{lim_y}
\lim_{\substack{k \to \infty \\ k \in K}} y_k^i =
\lim_{\substack{k \to \infty \\ k \in K}} x_k =
x^*, \quad i = 1,\ldots,n+1.
\end{equation}
Since $x_{k+1} = y_k^{n+1}$, then point (i) follows.

To show point (ii), assume that $\Astar^+(x^*) \ne \emptyset$.
Let $\kA \in K$ be the first iteration such that the two following relations hold for all $k \ge \kA$, $k \in K$:
\begin{subequations}\label{neighborhood_A}
\begin{gather}
\norm{y_k^i-x^*} \le \min\biggl\{\frac1L,\frac2{2L + \Lmax + 2\gamma}\biggr\} \zeta(x^*), \quad i = 1,\ldots,n, \label{neighborhood_A1} \\
\norm{y_k^i-x^*} + \frac{\Lmax}{2L} \max_{j=1,\ldots,n} \tilde \alpha^j_k \le \frac{\zeta(x^*)}L, \quad i = 1,\ldots,n. \label{neighborhood_A2}
\end{gather}
\end{subequations}
Note that~\eqref{lim_y} and Proposition~\ref{prop:conv1} imply the existence of $\kA \in K$ such that~\eqref{neighborhood_A} holds for all $k \ge \kA$, $k \in K$.

Consider an index $i \in \Astar^+(x^*)$ and an iteration $k \ge \kA$, $k \in K$.
To prove that $(x_{k+1})_i = x^*_i$, we have to show that $(y^{i+1}_k)_i = x^*_i$ since, from the instructions of the algorithm, $(x_{k+1})_i = (y^{i+1}_k)_i$.
Without loss of generality, assume that $x^*_i = l_i$ (the proof for the case $x^*_i = u_i$ is identical, except for minor changes). So, we have to show that
\begin{equation}\label{identification_proof1}
(y^{i+1}_k)_i = l_i.
\end{equation}

Preliminarily, we want to prove that
\begin{gather}
\abs{z_i-l_i} \le \frac{2 \nabla_i f(z)}{L_i + 2\gamma} \quad \forall z \text{ such that } \|z-x^*\| \le \|y_k^i-x^*\|, \label{neigh1}  \\
\nu^i_k \le \frac{2 \nabla_i f(y_k^i)}{L_i}. \label{alpha_tilde_proof}
\end{gather}
According to Definitions~\ref{def:stat_grad} and~\ref{def:sc}, we have that
$\nabla_i f(x^*) > 0$ and, from the definition of $\zeta(x^*)$ given in~\eqref{sc_measure}, it follows that
\begin{equation}\label{zeta_nabla1}
0 < \zeta(x^*) \le \nabla_i f(x^*).
\end{equation}
Consider any $z \in \R^n$ such that
$\|z-x^*\| \le \|y_k^i-x^*\|$.
Using the Lipschitz continuity of $\nabla f$, we have
\begin{equation}\label{lips_proof1}
\nabla_i f(x^*) - \nabla_i f(z) \le \norm{\nabla f(z) - \nabla f(x^*)} \le L \norm{z-x^*}.
\end{equation}
Moreover, from~\eqref{neighborhood_A1} we can write
\[
\|z-x^*\| \le \|y_k^i-x^*\| \le \frac{2\zeta(x^*)}{2L + \Lmax + 2\gamma}.
\]
Multiplying the first and last terms above by $(2L+\Lmax+2\gamma)/(\Lmax+2\gamma)$, we have
\[
\Bigl(\frac{2L}{\Lmax+2\gamma} + 1\Bigr) \norm{z-x^*} \le \frac{2\zeta(x^*)}{\Lmax + 2\gamma},
\]
that is,
\begin{equation}\label{norm_zx}
    \norm{z-x^*} \le \bigl(\zeta(x^*) - L\norm{z-x^*}\bigr) \frac2{\Lmax + 2\gamma}.
\end{equation}
Since, from~\eqref{zeta_nabla1} and~\eqref{lips_proof1}, we have
\begin{equation}\label{zeta_proof}
\zeta(x^*) \le \nabla_i f(z) + L \norm{z-x^*},
\end{equation}
then, using \eqref{norm_zx}, we obtain
\[
\norm{z-x^*} \le \frac{2 \nabla_i f(z)}{\Lmax + 2\gamma}.
\]
Taking into account that $L_i \le \Lmax$ and recalling that $x^*_i = l_i$, it follows that~\eqref{neigh1} holds.
To prove~\eqref{alpha_tilde_proof}, from~\eqref{neighborhood_A2} and the definition of $\nu^i_k$ we can write
\[
\nu^i_k \le \max_{j=1,\ldots,n} \tilde \alpha^j_k \le \bigl(\zeta(x^*)-L\norm{y^i_k-x^*}\bigr) \frac2{\Lmax}.
\]
Using~\eqref{zeta_proof} with $z = y^i_k$ and the fact that $L_i \le \Lmax$, we thus get~\eqref{alpha_tilde_proof}.

In view of~\eqref{alpha_tilde_proof} and Proposition~\ref{prop:d2}, it follows that
\begin{equation}\label{d_k}
d^i_k = -e_i,
\end{equation}
that is, $y^{i+1}_k = y^i_k - \alpha^i_k e_i$.
Using $z=y^i_k$ in~\eqref{neigh1}, we also have
\begin{equation}\label{alpha_max}
\alphamax = \abs{(y^i_k)_i-l_i} \le \frac{2 \nabla_i f(y^i_k)}{L_i + 2\gamma},
\end{equation}
where $\alphamax$ is the largest feasible stepsize along the direction $d^i_k$ at $y^i_k$.
So, if $\alphamax = 0$, then $\alpha_k^i = 0$ and, using~\eqref{d_k}, we have $(y_k^{i+1})_i =  (y_k^i)_i = l_i$, thus proving~\eqref{identification_proof1}.
If $\alphamax > 0$, using Proposition~\ref{prop:alpha_lips_descent},  \eqref{d_k} and~\eqref{alpha_max}, it follows that a sufficient decrease of $f$ along $d^i_k$ is  obtained with the first stepsize $\bar \alpha$ used in the line search, that is, the condition at line~4 of Algorithm~\ref{alg:ls} is satisfied.
Now, consider the extrapolation phase in the line search procedure, that is, lines~12--15 of Algorithm~\ref{alg:ls}.
Recalling~\eqref{d_k}, each stepsize $\omega = \min\{\alpha/\delta,\alphamax\}$ is such that $\alpha \le \omega \le (y^i_k)_i - l_i$, that is,
\[
0 \le {\omega} - \alpha \le (y^i_k + \alpha d^i_k)_i - l_i.
\]
So, from~\eqref{d_k} and the fact that $x^*_i = l_i$, it follows that
$\|y^i_k + \alpha d^i_k - x^*\| \le \|y^i_k-x^*\|$. So, we can apply~\eqref{neigh1} with $z = y^i_k + \alpha d^i_k$ and then we obtain
\[
0 \le {\omega} - \alpha \le (y^i_k + \alpha d^i_k)_i - l_i \le \frac{2 \nabla_i f(y^i_k + \alpha d^i_k)}{L_i + 2\gamma}
\]
for every stepsize $\omega$ used in the extrapolation.
Then, using Proposition~\ref{prop:alpha_lips_descent} with $x=y^i_k + \alpha d^i_k$, $s = \omega - \alpha$ and $d^i_k$ as in~\eqref{d_k}, it follows that
\[
f(y^i_k + {\omega} d^i_k) \le f(y^i_k + \alpha d^i_k) - \gamma ({\omega}- \alpha)^2.
\]
Namely, a sufficient decrease of $f$ is obtained with all stepsizes used in the extrapolation and we quit when we get the largest feasible stepsize, meaning that $(y^{i+1}_k)_i$ will be at the lower bound $l_i$ and thus proving~\eqref{identification_proof1}.
\end{proof}

Note that Theorem~\ref{main_identif_theorem} establishes finite identification for any limit point of $\{x_k\}$, thus not requiring the convergence of the whole sequence.
Note also, in the proof of Theorem~\ref{main_identif_theorem}, the crucial role played by the extrapolation in the line search procedure. Loosely speaking, when we are sufficiently close to a stationary point, expanding the stepsize allows us to hit the lower or the upper bound, provided the strict complementarity condition holds. This guarantees to identify all the variables satisfying the strict complementarity after a finite number of iterations.

Now, let us point out a useful property for the limit points of $\{x_k\}$. To this aim, let us define $X^\star$ as the set of all limit points of $\{x_k\}$, i.e.,
\[
X^\star := \{x \colon\exists K\subseteq\N \text{ such that } \lim_{k\to\infty, k\in K}x_k = x\}.
\]
The following result roughly states that,
if for all $x\in X^\star$ there does not exists $i$ such that $x_i$ violates the strict complementarity, then either all $x_i$ lie on the same bound or they all are strictly feasible.

\begin{proposition}\label{prop_utile1}
Let $\{x_k\}$ be the sequence of points produced by Algorithm~\ref{alg:dfl} and consider an index $i \in \{1,\ldots,n\}$.
Assume that there is no $x\in X^\star$ such that $i \in \Astar(x) \setminus \Astar^+(x)$.
Then,
\begin{itemize}
\item if there exists $x^* \in X^\star$ such that $i \in \Astar^+(x^\star)$, we have that $x_i = x^*_i$ for all $x\in X^\star$;
\item otherwise, $x_i \in (l_i, u_i)$ for all $x\in X^\star$.
\end{itemize}
\end{proposition}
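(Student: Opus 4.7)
The plan is to combine two ingredients. From Theorem~\ref{th:stationary} we have $\|x_{k+1}-x_k\|\to 0$ and every limit point is stationary; and since $\{x_k\}\subseteq \LS$, assuming boundedness of the level set (so that $\{x_k\}$ is bounded), the classical result on asymptotically regular bounded sequences guarantees that the accumulation-point set $X^\star$ is compact and connected. Combined with Definition~\ref{def:stat_grad}, this lets us translate the position of $x_i$ in $[l_i,u_i]$ into a sign constraint on $\nabla_i f(x)$ at every $x\in X^\star$.

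The second bullet is immediate: if no $x\in X^\star$ has $i\in\Astar^+(x)$, and the standing hypothesis of the proposition rules out $i\in\Astar(x)\setminus\Astar^+(x)$ as well, then $i\notin \Astar(x)$ for every $x\in X^\star$, i.e., $x_i\in (l_i,u_i)$.

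For the first bullet, fix $x^*\in X^\star$ with $i\in\Astar^+(x^*)$ and assume without loss of generality $x^*_i=l_i$ (the case $x^*_i=u_i$ is symmetric), so that Definition~\ref{def:stat_grad} together with strict complementarity gives $\nabla_i f(x^*)>0$. Introduce
\[
A=\{x\in X^\star:\ x_i=l_i\}.
\]
Clearly $A$ is closed in $X^\star$ by continuity of the $i$-th coordinate projection. To show $A$ is open in $X^\star$, take any $x\in A$: since $i\in\Astar(x)$, the hypothesis of the proposition forces $i\in\Astar^+(x)$, so $\nabla_i f(x)\neq 0$, and Definition~\ref{def:stat_grad} applied at the stationary point $x$ with $x_i=l_i$ yields $\nabla_i f(x)>0$. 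Continuity of $\nabla f$ then provides a ball $B(x,r)$ on which $\nabla_i f>0$. For any $x'\in X^\star\cap B(x,r)$, the point $x'$ is stationary with $\nabla_i f(x')>0$, and among the three alternatives in Definition~\ref{def:stat_grad} only $x'_i=l_i$ is compatible with this sign (the cases $x'_i\in(l_i,u_i)$ and $x'_i=u_i$ would require $\nabla_i f(x')=0$ and $\nabla_i f(x')\le 0$, respectively). Thus $x'\in A$, so $A$ is open. Being a nonempty clopen subset of the connected set $X^\star$, we conclude $A=X^\star$, i.e., $x_i=l_i=x^*_i$ for every $x\in X^\star$.

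The only delicate step is the connectedness of $X^\star$, which hinges on boundedness of $\{x_k\}$ inside $\LS$; given that, the rest is a routine clopen argument driven by Definition~\ref{def:stat_grad} and the standing hypothesis that precludes active components violating strict complementarity at any limit point.
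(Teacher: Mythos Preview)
Your proof is correct and rests on the same three ingredients as the paper's argument: Ostrowski's theorem (via $\|x_{k+1}-x_k\|\to 0$ and boundedness) giving connectedness of $X^\star$, stationarity of every limit point, and continuity of $\nabla f$. The technical implementation differs, however. The paper takes two limit points $\bar x,\bar y$ with $\bar y_i=l_i$, assumes $\bar x_i>l_i$, picks a continuous path $\rho:[a,b]\to X^\star$ joining them, locates the first time the path hits the face $\{x_i=l_i\}$, and derives a contradiction between $\nabla_i f(\rho(t))=0$ just before that time and $\nabla_i f(\rho(\bar t))>0$ at it. Your argument instead shows that the set $A=\{x\in X^\star: x_i=l_i\}$ is clopen in $X^\star$ and invokes connectedness directly. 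Your route is arguably cleaner, and in fact more faithful to what Ostrowski's theorem actually delivers: it yields that $X^\star$ is compact and \emph{connected}, not path-connected, so the paper's ``there exists a continuous function $\rho$'' step tacitly assumes more than has been established, whereas your clopen argument needs only connectedness. The boundedness caveat you flag is shared by both proofs.
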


\begin{proof}
We limit ourselves to show only the first point since the second one can be obtained as a logical consequence.
From Theorem~\ref{th:stationary} and recalling Ostrowski's theorem~\cite{bertsekasbook}, the set $X^\star$ of limit points of the sequence $\{x_k\}$ is a connected set. Now, let us consider any two points in $X^\star$, say $\bar x\in X^\star$ and $\bar y\in X^\star$, such that $\bar x\neq\bar y$,  and $\bar y_i \in \{l_i, u_i\}$.
Since $X^\star$ is connected, there exists a continuous function $\rho:[a,b]\to\R^n$ such that $\rho(a) = \bar x$, $\rho(b) = \bar y$ and $\rho(t)\in X^\star$, i.e., $\rho(t)$ {is stationary, for all} $t\in[a,b]$. Let us assume, without loss of generality, that $\bar y_i = l_i$ (the proof for the case $\bar y_i = u_i$ is identical, except for minor changes).
By contradiction, now assume that $\bar x_i > l_i$.
Since $\rho(a)_i = \bar x_i > l_i$ and $\rho(b)_i = \bar y_i = l_i$, then there exists $\bar t\in(a,b]$ such that $\rho(t)_i > l_i$ for all $t\in [a,\bar t)$ and $\rho(\bar t)_i = l_i$. Furthermore, by the stationarity conditions given in Definition~\ref{def:stat_grad}, we have that $\nabla_i f(\rho(t)) = 0$ for all $t\in[a,\bar t)$ and $\nabla_i f(\rho(\bar t)) > 0$, where the last inequality follows from the stated hypothesis. Then, by continuity of $\nabla f$, a scalar $\hat t\in(a,\bar t)$ must exist such that $\nabla_i f(\rho(t)) > 0$ for all $t\in (\hat t,\bar t]$.
This is a contradiction since $\nabla_i f(\rho(t)) = 0$ for all $t\in(\hat t,\bar t)$.
\end{proof}

Applying the above proposition for all indices $i \in \{1,\ldots,n\}$, the following result immediately follows, enforcing the finite active-set identification property established in Theorem~\ref{main_identif_theorem} when all the limit points of  $\{x_k\}$ are non-degenerate.

\begin{corollary}
Let $\{x_k\}$ be the sequence of points produced by Algorithm~\ref{alg:dfl} and assume that every $x\in X^\star$ is non-degenerate.
Then, for any pair $x',x'' \in X^\star$, we have $\Astar(x')=\Astar(x'')$ and $x'_i = x''_i$ for all $i \in \Astar(x')$. \end{corollary}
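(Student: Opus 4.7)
The plan is to derive this corollary as an immediate consequence of Proposition~\ref{prop_utile1} applied coordinate by coordinate, exploiting the non-degeneracy hypothesis to discharge the ``no index $i$ with $i\in\Astar(x)\setminus\Astar^+(x)$'' assumption of that proposition.

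First I would unpack the non-degeneracy assumption. By Definition~\ref{def:sc}, non-degeneracy of every $x\in X^\star$ means that for each $x\in X^\star$ and each $i\in\Astar(x)$ the strict complementarity condition holds, i.e.\ $i\in\Astar^+(x)$. Equivalently, $\Astar(x)=\Astar^+(x)$ for every $x\in X^\star$, and hence there is no $x\in X^\star$ with $i\in\Astar(x)\setminus\Astar^+(x)$ for any index $i\in\{1,\ldots,n\}$. This is precisely the hypothesis required by Proposition~\ref{prop_utile1}.

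Next, I fix an arbitrary $i\in\{1,\ldots,n\}$ and split into the two cases offered by Proposition~\ref{prop_utile1}. In the first case, there exists some $x^*\in X^\star$ with $i\in\Astar^+(x^*)$; the proposition then yields $x_i=x^*_i\in\{l_i,u_i\}$ for every $x\in X^\star$, so $i\in\Astar(x)$ for every $x\in X^\star$, and moreover all limit points share the same value of the $i$th component. In the second case, $x_i\in(l_i,u_i)$ for every $x\in X^\star$, so $i\notin\Astar(x)$ for every $x\in X^\star$. Either way, membership of $i$ in $\Astar(\cdot)$ is the same for all limit points.

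Finally, taking the conjunction over $i=1,\ldots,n$ gives $\Astar(x')=\Astar(x'')$ for any $x',x''\in X^\star$, while the first case above additionally delivers $x'_i=x''_i$ for every $i$ in this common active set. The only thing that requires any care is checking that the non-degeneracy assumption is indeed the right hypothesis to kill the ``exceptional'' situation excluded in Proposition~\ref{prop_utile1}; once that bookkeeping is done, the corollary follows with no further work.
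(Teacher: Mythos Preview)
Your proposal is correct and follows exactly the approach indicated in the paper, which simply states that the corollary follows by applying Proposition~\ref{prop_utile1} for all indices $i\in\{1,\ldots,n\}$. Your write-up just makes explicit the bookkeeping that the non-degeneracy hypothesis discharges the assumption of Proposition~\ref{prop_utile1} and that the two cases of that proposition yield, respectively, a common active index with a common value and a common inactive index.
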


\section{Conclusions}\label{sec5}
In this paper, we have analyzed a derivative-free line search method for bound-constrained problems where the objective function has a Lipschitz continuous gradient.
For this algorithm, we have first provided complexity results. In more detail, given a prespecified threshold $\epsilon > 0$, we have shown that the criticality measure $\chi(x_k)$ (which vanishes at stationary points) falls below $\epsilon$ after at most $\bigO{ (n\epsilon^{-2})}$ iterations, requiring at most $\bigO{(n^2\epsilon^{-2})}$ function evaluations. These bounds match those obtained for (deterministic) direct-search~\cite{gratton2019direct} and model-based~\cite{hough2022model} methods.
Additionally, we have established an upper bound of $\bigO{(n^2\epsilon^{-2})}$ on the \textit{total} number of iterations where $\chi(x_k) \ge \epsilon$.
The latter result is obtained thanks to the extrapolation strategy used in the proposed line search, allowing us to upper bound $\chi(x_k)$ on both successful and unsuccessful iterations.

In the last part of the paper, we have considered the active-set identification property of the proposed method, i.e., the capability to detect the variables lying at the lower or the upper bound in the final solutions. In this respect, we have shown that, in a finite number of iterations, the algorithm identifies the active constraints satisfying the strict complementarity condition.
Also this property is obtained by exploiting the extrapolation used in the proposed line search, allowing the stepsize to expand, when we are in a neighborhood of a stationary point, until we hit the boundary of the feasible set.

Finally, some topics for future research can be envisaged. In particular, under convexity assumptions, the worst-case complexity of the algorithm might be tightened, in order to match the results given in~\cite{dodangeh2016worst},
and a bound on the maximum number of iterations required to identify the active constraints might be given.
We wish to report more results in future works.

\bibliography{df_active_set}
\end{document}